\theoremstyle{plain}
\newtheorem{lemma}{Lemma}[section]
\newtheorem{proposition}[lemma]{\textbf{Proposition}}
\newtheorem{theorem}[lemma]{\textbf{Theorem}}
\theoremstyle{definition}
\newtheorem{definition}[lemma]{\textbf{Definition}}
\newtheorem{remark}[lemma]{Remark}
\newcommand{\Z}{\mathbb{Z}}
\algrenewcommand\algorithmicwhile{\textbf{While}}
\algrenewcommand\algorithmicfor{\textbf{For}}
\algrenewcommand\algorithmicdo{\textbf{Do}}
\algrenewcommand\algorithmicif{\textbf{If}}
\algrenewcommand\algorithmicthen{\textbf{Then}}
\algrenewcommand\algorithmicelse{\textbf{Else}}
\algrenewcommand\algorithmicend{\textbf{End}}
\algrenewcommand\algorithmicreturn{\textbf{Return}}
\newcommand{\R}{\mathbb{R}}
\newcommand{\C}{\mathbb{C}}
\newcommand{\A}{\mathbb{A}}
\newcommand{\p}{\mathbb{P}}
\newcommand{\SP}{\mathbb{S}}
\newcommand{\range}[2]{\lbrace #1,\ldots,#2\rbrace}	
\newcommand{\mscr}{\mathscr}
\newcommand{\mcal}{\mathcal}
\newcommand{\SO}{\operatorname{SO}}
\newcommand{\pgl}{\p\!\operatorname{GL}}
\newcommand{\M}{\overline{\mscr{M}}}
\DeclareMathOperator{\Lam}{Lam}
\tikzstyle{vertex}=[circle, draw, fill=black, inner sep=0pt, minimum size=4pt]
\tikzstyle{edge}=[line width=1.5pt,black!50!white]
\tikzstyle{gridp}=[inner sep=1pt,circle,fill=black!70!white]
\tikzstyle{gridl}=[black!50!white]
\tikzstyle{lnode}=[circle,white,draw=black!60!white,fill=black!60!white,inner 
\tikzstyle{cnode}=[circle,draw=black!60!white,fill=black!60!white,inner 
\tikzstyle{redge}=[edge,Red]
\tikzstyle{bedge}=[edge,NavyBlue]
\colorlet{ncol}{Green!60!black}
\tikzstyle{nvertex}=[vertex, draw=ncol, fill=ncol]
\tikzstyle{edgeq}=[edge,gray!60,densely dashed]
\tikzstyle{nedge}=[edge,ncol]
\tikzstyle{oedge}=[edge,Red!60!black]
\tikzstyle{curveline}=[line width=0.6mm]
\tikzstyle{markedpoint}=[circle, draw, fill]
\newcommand{\lowerparen}[2]{%
  \raisebox{-#1}{\(\displaystyle\left(\raisebox{#1}{\(\displaystyle #2\)}\right)\)}}
\title{Counting realizations of Laman graphs on the sphere}
\author[M. Gallet]{%
Matteo Gallet$^{\ast, \circ,\diamond}$}
\author[G. Grasegger]{%
Georg Grasegger$^{\ast, \triangleright}$}
\author[J. Schicho]{%
Josef Schicho$^{\ast, \circ}$}
\thanks{$^\ast$ Supported by the Austrian Science Fund (FWF): W1214-N15, 
 Project DK9.} 
\thanks{$^\circ$ Supported by the Austrian Science Fund (FWF): P31061.}
\thanks{$^\diamond$ Supported by the Austrian Science Fund (FWF): Erwin 
Schr\"odinger Fellowship J4253.}
\thanks{$^\triangleright$ Supported by the Austrian Science Fund (FWF): P31888.}
\address[MG]{International School for Advanced Studies/Scuola Internazionale Superiore di Studi Avanzati (ISAS/SISSA),
Via Bonomea 265, 34136 Trieste, Italy}
\email{mgallet@sissa.it}
\address[JS]{Research Institute for Symbolic Computation (RISC), Johannes 
Kepler University}
\email{jschicho@risc.jku.at}
\address[GG]{Johann Radon Institute for Computation and Applied Mathematics 
(RICAM), Austrian Academy of Sciences}
\email{georg.grasegger@ricam.oeaw.ac.at}
\begin{document}

\begin{abstract}
 We present an algorithm that computes the number of realizations of a Laman 
graph on a sphere for a general choice of the angles between the vertices. The 
algorithm is based on the interpretation of such a realization as a point in 
the moduli space of stable curves of genus zero with marked points, and on the 
explicit description, due to Keel, of the Chow ring of this space.
\end{abstract}

\maketitle

\section{Introduction}

Maybe the most important open problem in \emph{rigidity theory} is the 
characterization and study of rigid structures in three dimensional space. On 
the other side, planar structures are reasonably well-understood, in the sense 
that, for example, we have a characterization of graphs that are 
\emph{generically minimally rigid} in the plane. These are the graphs that, 
once a general assignment for the edge lengths is prescribed, admit only 
finitely many ways of realizing them in the plane respecting the assignment, if 
we consider equivalent realizations that differ by an isometry. 
Pollaczek-Geiringer \cite{Geiringer1927} and Laman \cite{Laman1970} described 
these graphs in terms of their combinatorics, and so they also go under the name 
of \emph{Laman graphs}. In~\cite{Capco2018}, using ideas from tropical 
geometry, the authors provide a recursive algorithm that computes the number of 
realizations of a Laman graph for a general assignment of edge lengths, up to 
plane isometries, if one allows complex coordinates for the points of the 
realization. It has been proven by Eftekhari et al. \cite{Eftekhari2018} that 
Laman graphs are generically minimally rigid also when we consider realizations 
on the sphere, so as before one can ask in how many different ways one can 
realize a Laman graph on the sphere. In this paper, we provide a recursive 
algorithm that computes this number (again, under the assumption that complex 
coordinates for the points are allowed) based on a completely different 
technique from the one used in the planar case. We hope that, although we still 
work on a surface, moving from the plane to the sphere could be a first step 
towards determining the number of realizations for generically minimally 
rigid graphs in three dimensions. For a related work on this topic, discussing 
real realizations of graphs on the sphere (in addition to the plane and the 
space), see the recent paper by Bartzos et al.~\cite{Bartzos2018}. Among 
other things, the latter paper proves that for some graphs one can achieve all 
possible complex realizations via real instances.

\begin{figure}[ht]
  \begin{center}
    \includegraphics[height=4.5cm,trim=1.5cm 0 1cm 0,clip=true]{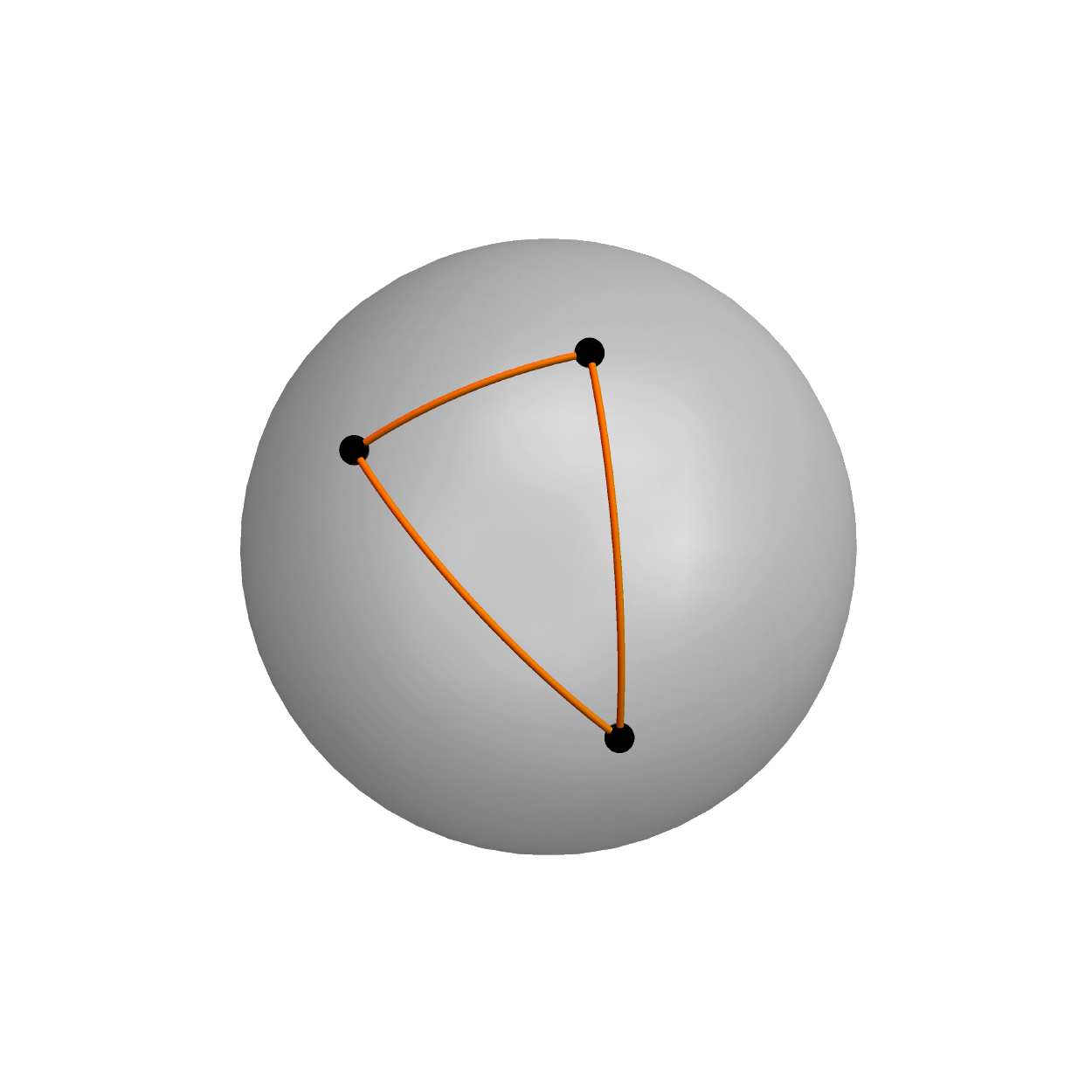}
    \includegraphics[height=4.5cm]{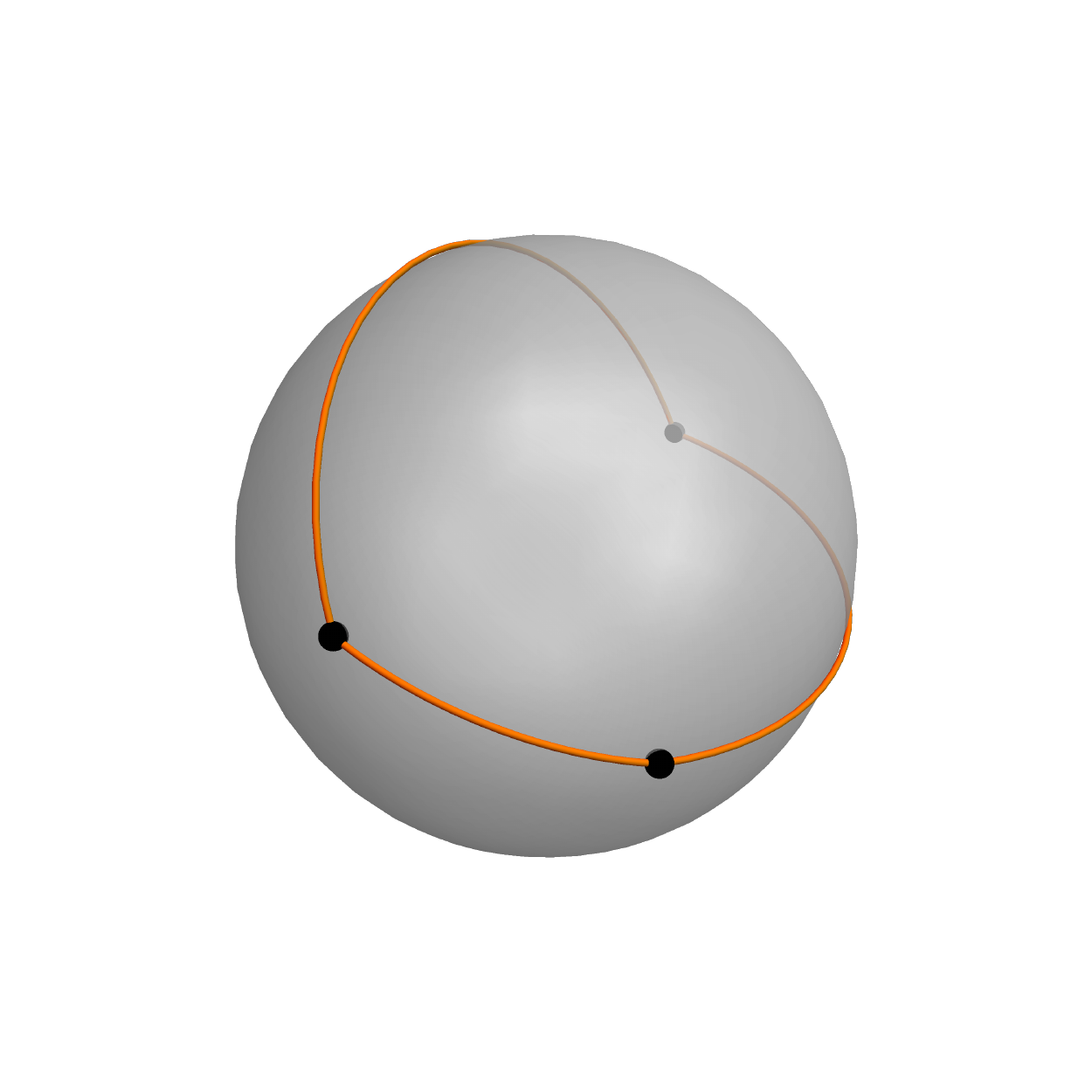}
    \includegraphics[height=4.5cm,trim=1cm 0 1.5cm 0,clip=true]{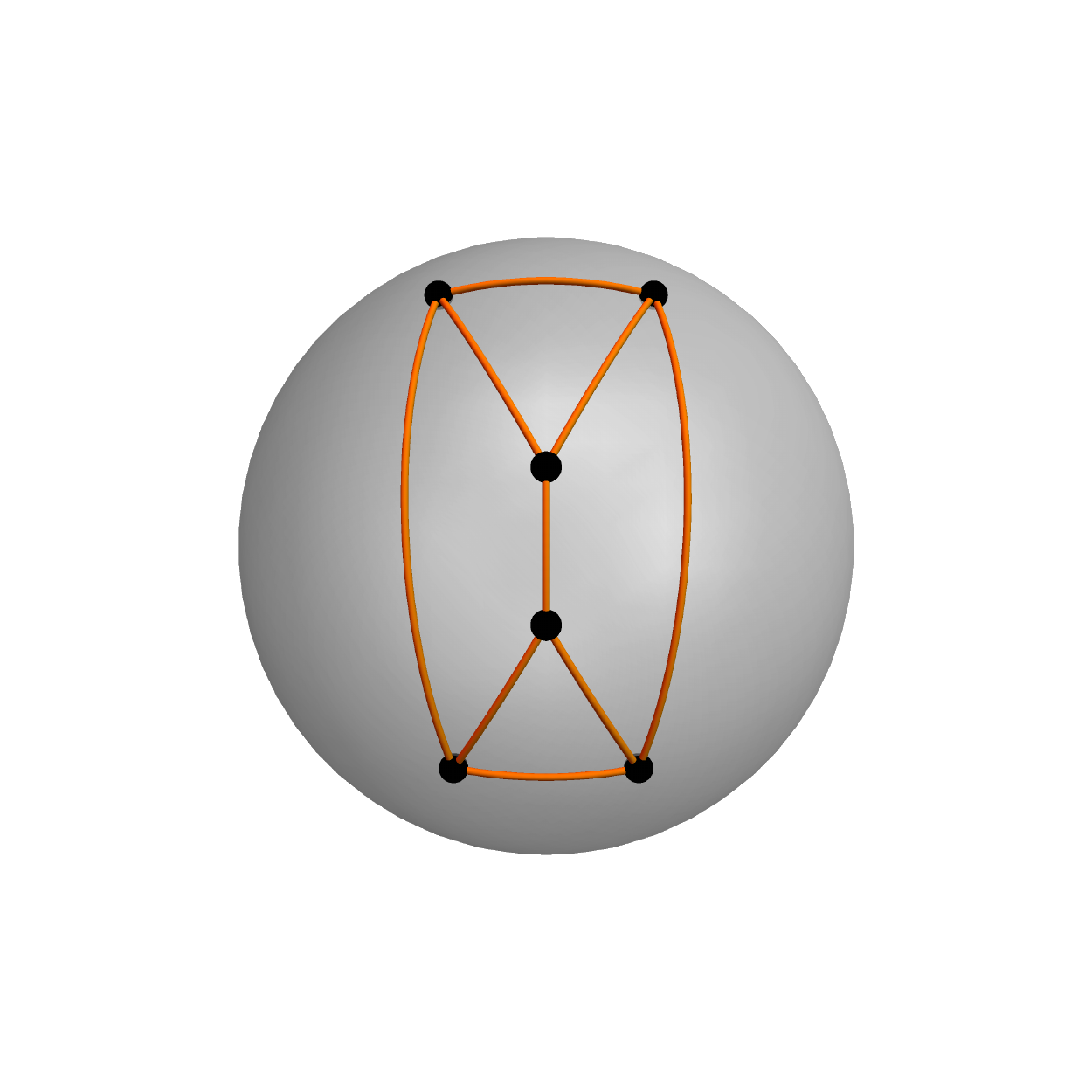}
    \caption{Realizations of graphs on the sphere.}
	\end{center}	
\end{figure}

\textbf{Our result.}
The main result of this paper is an algorithm that computes the number of 
realizations of a Laman graph on the (complex) sphere, up to the action of the 
group~$\SO_3(\C)$ (recall that the real analogue of this group, $\SO_3(\R)$, is 
the group of isometries of the real sphere). The key idea is to interpret 
realizations on the sphere up to~$\SO_3(\C)$ as elements of a moduli space, the 
so-called \emph{moduli space of rational curves with marked points}, where 
each point of the realization corresponds to two marked points. In this 
interpretation, assigning the distance between two points on the sphere 
corresponds to prescribing the cross-ratio of the $4$ related marked points. By 
using the properties of the moduli space, in particular the description of its 
Chow ring and the geometry of some of its divisors, we compute the cardinality 
of those elements for which the cross-ratios of the points corresponding to the 
edges of a Laman graph are assigned; this provides the answer to our original 
problem. Remarkably, we have been informed by Gaiane Panina that the moduli 
space of rational curves with marked points appears also in investigations of 
flexible polygons, see~\cite{Nekrasov2018}.

\textbf{Structure of the paper.}
Section~\ref{realizations} describes the problem we want to address in this 
paper. Section~\ref{realizations_as_moduli} provides the translation from 
realizations of a graph on the sphere to points of a moduli space of 
points on the projective line. Section~\ref{moduli_space} describes, 
following~\cite{Keel1992}, the moduli space of points on the projective line, 
and its compactification as moduli space of stable curves of genus zero with 
marked points; in particular, we recall the description by Keel of the Chow 
rings of the latter, which plays a key role in the algorithm. 
Section~\ref{algorithm} describes the algorithm. Section~\ref{data} reports some 
computational data obtained using the algorithm.

\textbf{Acknowledgments.} We thank the Erwin Schr\"odinger Institute (ESI) of 
the University of Vienna for the hospitality during the workshop ``Rigidity 
and Flexibility of Geometric Structures'', when this project started. We thank 
Anthony Nixon for providing us with useful references concerning rigidity 
of graphs on the sphere. We thank Jan Legersk\'y for helpful discussions on 
the topic of this paper.

\section{Realizations of graphs}
\label{realizations}

A \emph{Laman graph} is a graph $G = (V, E)$ such that $|E| = 2|V| - 3$ and 
$|E'| \leq 2|V'| - 3$ for every subgraph $G' = (V', E')$. Geiringer 
\cite{Geiringer1927} and Laman \cite{Laman1970} proved that these graphs are 
\emph{generically infinitesimally rigid} in the plane. This means the following. 
A \emph{realization} of~$G$ is a tuple $(P_v)_{v \in V}$ of points in the plane 
indexed by the vertices of~$G$. By applying a Euclidean isometry, we can always 
suppose that one of the points is the origin~$O$ of the plane, and another one 
lies on the $x$-axis. The set of all possible realizations satisfying the 
previous two requirements is then given by $\{ O \} \times \A^1_{\R} \times 
\bigl( \A^2_{\R} )^{n-2}$, where $n$ is the number of vertices. We can now 
consider the function that computes, for every edge $\{a,b\} \in E$, the 
distance $d_{\A^2_{\R}}(P_a, P_b)$ of the corresponding points in a realization. 
In this way we get a map: 
\[
 \begin{array}{rccc}
  \Psi_{\A^2_{\R}} \colon & \{ O \} \times \A^1_{\R} \times \bigl( \A^2_{\R} 
)^{n-2} & 
\longrightarrow & \R^{|E|} \\
  & (P_v)_{v \in V} & \mapsto & \bigl( d^2_{\A^2_{\R}}(P_a, P_b) 
\bigr)_{\{a,b\} \in E}
 \end{array}
\]
Notice that the map~$\Psi_{\A^2_{\R}}$ is a smooth map (better, an algebraic 
one) between smooth manifolds (better, algebraic varieties) of the same 
dimension. Laman proved that, if we pick a general point~$\vec{P}$ of the domain 
(namely, if we remove a finite number of ``bad'' subvarieties from the domain), 
then the Jacobian of~$\Psi_{\A^2_{\R}}$ at~$\vec{P}$ is invertible, i.e., the 
map~$\Psi_{\A^2_{\R}}$ is an isomorphism locally around~$P$. Notice that the 
fibers of~$\Psi_{\A^2_{\R}}$, namely the sets~$\Psi_{\A^2_{\R}}^{-1}(\lambda)$ 
for some $\lambda \in \R^{|E|}$, are the sets of realizations of~$G$ where the 
distances between points, whose corresponding vertices are connected by an edge, 
are prescribed by~$\lambda$. We give a name to these sets:

\begin{definition}
 Let $G = (V,E)$ be a Laman graph and let $\lambda \colon E \longrightarrow \R$. 
A realization of~$G$ \emph{compatible with} $\lambda$ is a function $\rho 
\colon V \longrightarrow \A^2_{\R}$ such that 
\[
  d^2_{\A^2_{\R}} \bigl( \rho(a), \rho(b) \bigr) = \lambda(\{a,b\}) 
  \quad
  \text{for every } \{a,b\} \in E \,.
\]
\end{definition}

When a fiber of~$\Psi_{\A^2_{\R}}$ is finite---namely when the number of 
realizations of~$G$ compatible with given edge lengths is finite---one may be 
interested in counting its cardinality, namely the number of ways of realizing a 
Laman graph with a specified assignment of edge lengths; however, over the real 
numbers, the cardinality of such fibers may depend on the point in the codomain. 
Since the result of Laman proves also that the map~$\Psi_{\A^2_{\R}}$ is 
dominant, if we pass to the complex numbers we have that, for a general 
element~$\lambda$ in the codomain, the fiber $\Psi_{\A^2_{\C}}^{-1}(\lambda)$ is 
finite and its cardinality does not depend on the point. In recent years, there 
has been some interest in providing lower and upper bounds for this number (see 
\cite{Borcea2004, Steffens2010, Emiris2013, Grasegger2018, Jackson2018, 
Bartzos2018}), and in~\cite{Capco2018} the authors provide an iterative formula 
to compute it, based on tropical geometry. A fully tropical proof of Laman 
theorem has recently been provided in~\cite{Bernstein2018}.

One can consider the notion of being generically infinitesimally rigid also on 
the sphere. In \cite{Eftekhari2018} it is proven that, also on the sphere, the 
class of generically infinitesimally rigid graphs coincides with the class of 
Laman graphs. On the sphere, the distance between two points can be taken to be 
the angle they form (viewed from the origin of the sphere). In this paper we 
adopt a slightly different definition, which involves the cosine of that angle, 
because it fits better in the algebraic framework we are going to use. Adopting 
this definition has no impact as the matter of computing the number of 
realizations of a graph is concerned. The advantage of this choice is that it 
provides an algebraic function, which hence allows extensions of fields (in 
particular, from the real to the complex numbers).

\begin{definition}
\label{definition:spherical_distance}
 Given two points $P, Q \in S^2$, we define their \emph{spherical distance} as 
\[
 d_{S^2}(P,Q) := \frac{1 - \langle P, Q \rangle}{2} \,,
\]
 where $\langle P, Q \rangle = \sum_{i=1}^{3} P_i Q_i$. In particular, if $P$ 
 and $Q$ are antipodal, their spherical distance is~$1$. 
\end{definition}

In this context, we can repeat the same considerations as before: given a 
configuration~$\vec{P}$ on the sphere of a Laman graph~$G = (V,E)$, we can 
always suppose, by applying rotations, that one of the points is $(1,0,0)$ and 
another lies on a great circle through~$(1,0,0)$. We then pass to the complex 
setting and have that, as in the plane, if $G$ is a Laman graph, then the map
\[
 \begin{array}{rccc}
  \Psi_{S^2_{\C}} \colon & \{ (1,0,0) \} \times S^1_{\C} \times \bigl( S^2_{\C} 
)^{n-2} & 
\longrightarrow & \C^{|E|} \\
  & (P_v)_{v \in V} & \mapsto & \bigl( d_{S^2}(P_a, P_b) \bigr)_{\{a,b\}\in E}
 \end{array}
\]
is dominant and its fibers over general points are finite and of constant 
cardinality. Here, we denoted by~$S^2_{\C}$ the set $\{ (x,y,z) \in \C^3 \, : \, 
x^2 + y^2 + z^2 = 1\}$, namely the complexification of the sphere, and similarly 
for the circle~$S^1_{\C}$. As we remarked, since the function~$d_{S^2}$ 
describing the spherical distance is algebraic, we can apply it also to pairs of 
complex points in~$S^2_{\C}$. Notice that in the real setting we consider 
realizations of the graph up to rotations, namely elements of~$\SO_3(\R)$; when 
we pass to the complex numbers, we consider realizations up to~$\SO_3(\C)$, 
where
\[
 \SO_3(\C) := 
 \bigl\{
  R \in \C^{3 \times 3} \, \colon \, R R^{t} = R^{t} R = \mathrm{id}, \, 
  \det(R) = 1
 \bigr\} \,.
\]
In this paper, the elements of~$\SO_3(\C)$ will also be (improperly) referred as 
rotations, or isometries of~$S^2_{\C}$. 

This is the main goal of our paper:

{\textbf{Goal.}} Compute the cardinality of a general fiber of the 
map~$\Psi_{S^2_{\C}}$. In other words, compute the number of realizations of a 
Laman graph on the sphere compatible with a general assignment of spherical 
distances for its edges, up to $\SO_3(\C)$.

We are going to achieve this goal by interpreting realizations 
up to~$\SO_3(\C)$ as collections of points on the projective complex 
line~$\p^1_{\C}$, up to the action of~$\pgl_2(\C)$, namely the group of 
automorphisms of~$\p^1_{\C}$. These objects can be interpreted as points in a 
moduli space, and the explicit description of the intersection theory on that 
moduli space provides the answer to our question.

\section{Realizations on the sphere as points on a moduli space}
\label{realizations_as_moduli}

The aim of this section is to show how we can interpret a realization of a 
graph on the sphere, up to sphere isometries, as a point of the moduli 
space of stable curves of genus zero with marked points. This provides the 
theoretical background on which the algorithm presented in 
Section~\ref{algorithm} is based.

We would like to express the spherical distance between two points 
in~$S^2_{\C}$ as the cross-ratio of four points in~$\p^1_\C$. To do so, we 
associate to each point in~$S^2_{\C}$ two points in~$\p^1_\C$ via the 
following construction.

\begin{definition}
Let 
\[
 \SP_{\C}^2 = \bigl\{ (x:y:z:w) \in \p^2_{\C} \,\colon\, x^2 + y^2 + z^2 - 
w^2 = 0 \bigr\}
\]
be the projective closure of~$S^2_{\C}$ in $\p^2_{\C}$. Let $A$ be the 
intersection 
of~$\SP_{\C}^2$ with the plane at infinity $\{ w = 0 \}$. The conic~$A$ is 
called the \emph{absolute conic}. Since~$\SP^2_{\C}$ is a smooth quadric 
in~$\p^3_{\C}$ there are exactly two families of lines 
on~$\SP_{\C}^2$; we denote them by~$\mscr{F}_1$ and~$\mscr{F}_2$. Every point 
$P \in \SP^2_{\C}$ is contained in exactly one line~$L_1$ 
of~$\mscr{F}_1$ and exactly one line~$L_2$ of~$\mscr{F}_2$. The union of these 
two lines can be obtained by intersecting~$\SP^2_{\C}$ with the tangent plane 
of~$\SP^2_{\C}$ at~$P$. We define the \emph{left} and the 
\emph{right lifts} of~$P$ as the intersections of $L_1$ and $L_2$ with $A$, 
respectively. We denote them by~$P^l$ and~$P^r$, respectively. 
\end{definition}

\begin{remark}
 Notice that the absolute conic~$A$ is a rational curve. This means that, given 
four points on~$A$ (for example, the left and right lifts of two points 
in~$S^2_{\C}$), we can speak about their cross-ratio.
\end{remark}

\begin{lemma}
\label{lemma:equivalence}
 Let $P, Q \in S^2_{\C}$. Let $P^l, P^r$ be the left and right lifts of~$P$, 
and $Q^l, Q^r$ be the left and right lifts of~$Q$. Then
\[
 d_{S^2}(P, Q) = 
 \frac{\mathrm{cr}\bigl( P^l, P^r, Q^l, Q^r \bigr)}{\mathrm{cr}\bigl( P^l, P^r, Q^l, Q^r \bigr) - 1} = 
 \mathrm{cr}\bigl( P^l, Q^r, Q^l, P^r \bigr),
\]
where $\mathrm{cr}$ stands for cross-ratio.
\end{lemma}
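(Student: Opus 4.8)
The plan is to reduce the statement to an explicit cross-ratio computation on $\p^1_\C$, after replacing $\SP^2_\C$ by the standard Segre quadric.

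First I would apply a linear change of coordinates on $\p^3_\C$ carrying $x^2+y^2+z^2-w^2$ to $\alpha\delta-\beta\gamma$, for instance $\alpha=w+z$, $\beta=x+iy$, $\gamma=x-iy$, $\delta=w-z$. Then $\SP^2_\C$ becomes the image of the Segre embedding $\p^1_\C\times\p^1_\C\hookrightarrow\p^3_\C$, $\bigl([s_0{:}s_1],[t_0{:}t_1]\bigr)\mapsto(s_0t_0:s_0t_1:s_1t_0:s_1t_1)$; the two rulings $\mscr{F}_1,\mscr{F}_2$ become the two families of fibers $\{s=\mathrm{const}\}$ and $\{t=\mathrm{const}\}$; the affine sphere $S^2_\C$ becomes the locus $w\neq 0$, i.e. $\alpha+\delta\neq 0$; and the absolute conic $A=\SP^2_\C\cap\{w=0\}$ becomes the $(1,1)$-curve $\{s_0t_0+s_1t_1=0\}$, which is the graph of the isomorphism $[t_0{:}t_1]\mapsto[-t_1{:}t_0]$. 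In particular $A$ is identified with $\p^1_\C$ via its $t$-coordinate, and I will use this coordinate to evaluate cross-ratios on $A$.

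Next, writing an affine point $P\in S^2_\C$ as $\bigl([p_0{:}p_1],[q_0{:}q_1]\bigr)$ and fixing the scaling that makes $w=1$, one gets explicit rational expressions for the three coordinates of $P$, hence for $\langle P,Q\rangle$ when $Q=\bigl([p'_0{:}p'_1],[q'_0{:}q'_1]\bigr)$; a short manipulation yields
\[
 d_{S^2}(P,Q)=\frac{1-\langle P,Q\rangle}{2}
 =\frac{(p_0p'_1-p_1p'_0)(q_0q'_1-q_1q'_0)}{(p_0q_0+p_1q_1)(p'_0q'_0+p'_1q'_1)}.
\]
On the other hand, the line of $\mscr{F}_1$ through $P$ meets $A$ at the point with $t$-coordinate $[p_1{:}-p_0]$ and the line of $\mscr{F}_2$ through $P$ meets $A$ at the point with $t$-coordinate $[q_0{:}q_1]$, so $P^l,P^r$ (and likewise $Q^l,Q^r$) are completely explicit points of $A\cong\p^1_\C$. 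Substituting them into $\mathrm{cr}(P^l,Q^r,Q^l,P^r)$ and clearing denominators reproduces precisely the expression displayed above; this is the second asserted equality. The first equality is then a formal consequence of the second, via the transformation law of the cross-ratio under permutations of its arguments: interchanging the second and fourth entries of a $4$-tuple replaces $\mathrm{cr}=\lambda$ by $\lambda/(\lambda-1)$. Equivalently, it comes down to the elementary identity $(p_0q_0+p_1q_1)(p'_0q'_0+p'_1q'_1)-(p_0q'_0+p_1q'_1)(p'_0q_0+p'_1q_1)=(p_0p'_1-p_1p'_0)(q_0q'_1-q_1q'_0)$.

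The computations themselves are routine; the only point that requires care is consistency of conventions — which ruling is labelled $\mscr{F}_1$, hence which lift is ``left'', and which normalization of the cross-ratio is in force. The statement is in fact insensitive to the first choice: interchanging $l$ and $r$ everywhere acts on each of the two $4$-tuples $(P^l,P^r,Q^l,Q^r)$ and $(P^l,Q^r,Q^l,P^r)$ by an element of the Klein four-group inside $S_4$, which leaves the cross-ratio unchanged; and a different normalization of the cross-ratio would at worst replace $\lambda\mapsto\lambda/(\lambda-1)$ by another element of its six-element orbit, so the two stated expressions still have to coincide. Configurations for which some lifts collide or fall at the point at infinity of the chosen coordinate form a proper closed subset, and may be handled by continuity or simply ignored, since only general $P,Q$ are needed in the sequel.
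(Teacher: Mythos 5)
Your proof is correct, but it reaches the identity by a genuinely different computation than the paper. The paper first uses the $\SO_3(\C)$-invariance of both sides to normalize $P=(1,0,0)$ and $Q=(c,s,0)$ with $c^2+s^2=1$, then computes the four lifts as tangent-plane intersections and evaluates the cross-ratio after projecting the absolute conic to a line; the whole thing reduces to checking one identity in $c$ and $s$. You instead diagonalize the quadric to Segre form, parametrize $\SP^2_{\C}$ by $\p^1_{\C}\times\p^1_{\C}$, and carry out the computation for completely general $P$, $Q$ in ruling coordinates. What your route buys is transparency: the spherical distance and the relevant cross-ratio both come out as the same ratio of $2\times 2$ determinants, and the passage between the two displayed expressions is exactly the ``Pl\"ucker-type'' identity $(p_0q_0+p_1q_1)(p'_0q'_0+p'_1q'_1)-(p_0q'_0+p_1q'_1)(p'_0q_0+p'_1q_1)=(p_0p'_1-p_1p'_0)(q_0q'_1-q_1q'_0)$, which you state and which is easily verified; the paper's normalization trick is shorter to write but hides this structure. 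I verified your distance formula, your identification of the lifts with the ruling parameters $[-p_1{:}p_0]$ and $[q_0{:}q_1]$, and the resulting cross-ratio computation; all are correct. One small caveat: your closing remark that under \emph{any} normalization of the cross-ratio ``the two stated expressions still have to coincide'' is not quite right --- under the convention for which the transposition of the second and fourth entries acts as $\lambda\mapsto 1-\lambda$ rather than $\lambda\mapsto\lambda/(\lambda-1)$, the first displayed equality of the lemma would fail. The lemma is true for the convention you (and the paper's own computation) use, so this does not affect the validity of your argument, but the robustness claim should be dropped or weakened.
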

\begin{proof}
 Recall that isometries of~$S^2_{\C}$ are projective automorphisms 
of~$\p^3_{\C}$ leaving the absolute conic~$A$ invariant. Hence applying an 
isometry to~$P$ and~$Q$ corresponds to applying an automorphism of~$\p^1_{\C}$ 
to their lifts, so the cross-ratio of the latter does not change. Hence we can 
suppose that $P = (1,0,0)$ and $Q = (c,s,0)$, where $c^2 + s^2 = 1$. With this 
choice, we have
\[
 d_{S^2}(P, Q) = \frac{1 - c}{2} \,.
\]
A direct computation shows that, since the tangent planes at~$P$ and~$Q$ have 
equations $x-w=0$ and $c\,x + s\,y - w = 0$, respectively:
\begin{align*}
 P^l &= (0:i:1:0)\,, & P^r &= (0:-i:1:0)\,, \\
 Q^l &= (-is:ic:1:0)\,, & Q^r &= (is: -ic:1:0)\,.
\end{align*}
In order to compute the cross-ratio, we take an isomorphism between~$A$ 
and~$\p^1_{\C}$, for example the one given by the projection of~$A$ from the 
point~$(i:0:1:0)$ to the line $\{z = w = 0\}$. The projections of the previous 
four points are
\[
 (-1:1:0:0), \; (1:1:0:0), \; (-1-s:c:0:0), \; (1-s:c:0)\,.
\]
Their cross-ratio is
\begin{align*}
 \left(\frac{-1-s}{c} + 1\right)\left(\frac{1-s}{c} - 1\right) \bigg/
 \left(\left(\frac{-1-s}{c} - 1\right)\left(\frac{1-s}{c} + 1\right)\right) \,.
\end{align*}
A direct computation then proves the statement.
\end{proof}

\begin{proposition}
\label{proposition:translation}
 Let $\vec{P} = (P_1, \dotsc, P_n)$ and $\vec{Q} = (Q_1, \dotsc, Q_n)$ be two 
$n$-tuples of points in~$S^2_{\C}$. Denote by $P_i^{l}, P_i^{r}$ and $Q_i^{l}, 
Q_i^{r}$ the left and right lifts of $P_i$ and $Q_i$, respectively, for all $i 
\in \{1, \dotsc, n\}$. Then $\vec{P}$ and $\vec{Q}$ differ by an isometry 
of~$S^2_{\C}$ if and only if $(\vec{P^{l}}, \vec{P^{r}})$ and $(\vec{Q^{l}}, 
\vec{Q^{r}})$ differ by an element of~$\pgl_2(\C)$.
\end{proposition}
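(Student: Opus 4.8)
The plan is to prove both implications by exploiting the already established correspondence between isometries of $S^2_{\C}$ and automorphisms of the absolute conic $A \cong \p^1_{\C}$. First I would handle the forward direction: if $\vec{Q} = g \cdot \vec{P}$ for some $g \in \SO_3(\C)$, then $g$ extends to a projective automorphism of $\p^3_{\C}$ fixing $\SP^2_{\C}$ and hence fixing $A$; since $g$ preserves each of the two rulings $\mscr{F}_1$ and $\mscr{F}_2$ (it acts on $S^2_{\C}$, not swapping left and right structures --- here one should note that $\SO_3$, being connected, cannot interchange the two families), it carries $P_i^l \mapsto Q_i^l$ and $P_i^r \mapsto Q_i^r$. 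Restricting $g$ to $A$ gives the desired element of $\pgl_2(\C)$ simultaneously sending $\vec{P^l}$ to $\vec{Q^l}$ and $\vec{P^r}$ to $\vec{Q^r}$.

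For the converse, suppose $\varphi \in \pgl_2(\C)$ satisfies $\varphi(P_i^l) = Q_i^l$ and $\varphi(P_i^r) = Q_i^r$ for all $i$. The key step is to show that $\varphi$, a priori just an automorphism of the abstract conic $A$, is induced by an isometry $g$ of $S^2_{\C}$. I would argue this as follows: the automorphism group of $\p^3_{\C}$ fixing the smooth quadric $\SP^2_{\C}$ surjects onto the automorphism group of the conic $A$ that extends to $\SP^2_{\C}$; more precisely, the group of automorphisms of $A$ that preserve (rather than swap) the two rulings is exactly the image of $\SO_3(\C)$. One clean way is dimension-counting plus connectedness: both $\pgl_2(\C)$ and $\SO_3(\C)$ are $3$-dimensional, the restriction map $\SO_3(\C) \to \mathrm{Aut}(A) \cong \pgl_2(\C)$ is an injective algebraic homomorphism (an isometry fixing $A$ pointwise would fix all the lines of both rulings and hence be the identity), hence it is surjective onto the connected group $\pgl_2(\C)$. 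Therefore $\varphi$ lifts to some $g \in \SO_3(\C)$. Then $g$ sends $P_i$ to a point of $S^2_{\C}$ whose left and right lifts are $\varphi(P_i^l) = Q_i^l$ and $\varphi(P_i^r) = Q_i^r$; since a point of $S^2_{\C}$ is recovered from its left and right lifts as the intersection of the corresponding lines $L_1 \in \mscr{F}_1$, $L_2 \in \mscr{F}_2$ (the map $P \mapsto (P^l, P^r)$ is injective), we conclude $g(P_i) = Q_i$, i.e. $\vec{P}$ and $\vec{Q}$ differ by the isometry $g$.

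The main obstacle I anticipate is the bookkeeping around the two rulings: one must make sure that ``left'' and ``right'' lifts are matched consistently, i.e. that an element of $\pgl_2(\C)$ matching $\vec{P^l}$ to $\vec{Q^l}$ and $\vec{P^r}$ to $\vec{Q^r}$ comes from an honest rotation and not merely from an automorphism of $\p^3_{\C}$ that swaps $\SP^2_{\C}$'s two families of lines (which would exchange the roles of $P^l$ and $P^r$). This is precisely where connectedness of $\SO_3(\C)$ --- as opposed to the full orthogonal group $\mathrm{O}_3(\C)$, whose extra component does swap the rulings --- enters, and it is the one subtlety worth spelling out carefully. The remaining ingredients (the explicit surjectivity $\SO_3(\C) \twoheadrightarrow \pgl_2(\C)$ and the injectivity of $P \mapsto (P^l, P^r)$) are routine: the former follows from the parametrization of $A$ used in the proof of Lemma~\ref{lemma:equivalence} together with a dimension argument, and the latter from the fact that two distinct lines of $\SP^2_{\C}$ from different rulings meet in a unique point which lies on $\SP^2_{\C}$.
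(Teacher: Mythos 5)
Your proposal is correct and follows essentially the same route as the paper: the heart of both arguments is that the restriction homomorphism $\SO_3(\C) \to \pgl_2(\C)$ is an isomorphism, proved via injectivity together with equal dimension and connectedness. You additionally spell out details the paper leaves implicit --- that a rotation preserves each ruling (so left lifts go to left lifts), and that a point of $S^2_{\C}$ is recovered from its two lifts --- which is a welcome, but not divergent, elaboration.
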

\begin{proof}
 Every isometry of~$S^2_{\C}$ is a projective automorphism of~$\p^3_{\C}$ 
leaving the absolute conic~$A$ invariant. This means that every isometry 
of~$S^2_{\C}$ determines an automorphism of~$A$. In this way we get a map
\[
 \SO_3(\C) \longrightarrow \pgl_2(\C) \,,
\]
which is a homomorphism of Lie groups. Our statement is proven if we can show 
that this is an isomorphism. Suppose that we have an isometry of~$S^2_{\C}$ 
that induces the identity on~$A$. Then the corresponding projective 
automorphism of~$\p^3_{\C}$ fixes the whole plane at infinity, and also the 
center of~$S^2_{\C}$: the only element in~$\SO_3(\C)$ satisfying these 
requirements is the identity. Hence, the homomorphism is injective. Since the 
two Lie groups have the same dimension and are both connected, then the 
homomorphism is also surjective. This concludes the proof.
\end{proof}

\begin{remark}
\label{remark:distinct}
 Notice that, in general, it is not true that $n$ distinct points on~$S^2_{\C}$ 
determine $2n$-tuples of distinct points on the absolute conic via the lift 
operation. In fact, this fails precisely when two points on~$S^2_{\C}$ belong 
to the same line. If $P,Q \in S^2_{\C}$ with $P = (\alpha,\beta,\gamma)$ and $Q 
= (\alpha', \beta', \gamma')$, then $P$ and $Q$ belong to the same line when, 
considered as points in~$\p^3_{\C}$, they are orthogonal with respect to the 
quadratic form determined by 
the equation of~$\SP_{\C}^2$, namely if:
\[
 \begin{pmatrix}
  \alpha & \beta & \gamma & 1
 \end{pmatrix}
 \begin{pmatrix}
  1 \\ 
  & 1 \\
  & & 1 \\
  & & & -1
 \end{pmatrix}
 \begin{pmatrix}
  \alpha' \\ \beta' \\ \gamma' \\ 1
 \end{pmatrix}
 = 0
 \quad \Leftrightarrow \quad 
 \left\langle
 \begin{pmatrix}
  \alpha \\ \beta \\ \gamma
 \end{pmatrix},
 \begin{pmatrix}
  \alpha' \\ \beta' \\ \gamma'
 \end{pmatrix}
 \right\rangle - 1 = 0 \,.
\]
Hence, if we suppose that $\vec{P}$ is a realization of a Laman graph~$G$ 
compatible with a general assignment of spherical distances for its edges, then 
$\vec{P}$ determines a $2n$-tuple of distinct points $(\vec{P^{l}}, 
\vec{P^{r}})$ on the absolute conic~$A$.
\end{remark}

The combination of Lemma~\ref{lemma:equivalence}, 
Proposition~\ref{proposition:translation}, and Remark~\ref{remark:distinct} 
shows that, instead of considering $n$-tuples $\vec{P}$ that are realizations of 
a Laman graph compatible with a general assignment of spherical distances for 
its edges, up to the action of~$\SO_3(\C)$, we can consider $2n$-tuples 
$(\vec{P^{l}}, \vec{P^{r}})$ of points on~$\p^1_{\C}$ for which some 
cross-ratios are assigned, up to the action of~$\pgl_2(\C)$. The latter are 
elements of a so-called \emph{moduli space} of curves of genus zero with marked 
points. In the next section we describe this object and its properties 
concerning intersection theory. Afterwards, we come back to our original 
problem and cast it into this theoretical framework.

\section{The moduli space of stable curves of genus zero with marked points}
\label{moduli_space}

In this section we describe for the reader's convenience the well-known moduli 
space of stable curves of genus zero with marked points and its Chow ring, 
following~\cite{Keel1992}. No new results are presented in this section.

Let us start by recalling a basic and fundamental result in projective 
geometry: every triple of distinct points~$P$, $Q$, and~$R$ in~$\p^1_{\C}$ can 
be mapped to the triple $(1:0)$, $(0,1)$, and~$(1:1)$ via a unique automorphism 
of~$\p^1_{\C}$, namely an element of~$\pgl_2(\C)$. Hence, any triple of distinct 
points in~$\p^1_{\C}$ is projectively equivalent to any other one, namely there 
always exists an automorphism mapping one to the other. If we consider a 
$4$-tuple of distinct points $P$, $Q$, $R$, $S$, then there is a unique element 
that takes it to the $4$-tuple $(1:0)$, $(0:1)$, $(1:1)$, and~$(1:\lambda)$: 
the number~$\lambda$ is called the \emph{cross-ratio} of the tuple $(P,Q,R,S)$. 
Two $4$-tuples of distinct points are then projectively equivalent if and only 
if their cross-ratios are the same. Hence, each equivalence class of $4$-tuples 
of distinct points modulo~$\pgl_2(\C)$ is represented by its cross-ratio, 
namely by an element in $\C \setminus \{ 0, 1 \}$, or equivalently in 
$\p^1_{\C} 
\setminus \{ (1:0), (0:1), (1:1) \}$. We then say that $\p^1_{\C} 
\setminus \{ (1:0), (0:1), (1:1) \}$ is the \emph{moduli space} of $4$-tuples 
of distinct points in~$\p^1_{\C}$. 

We can consider, for every $m \geq 4$, the space of equivalence classes of 
$m$-tuples of distinct points under the action of~$\pgl_2(\C)$: each such 
equivalence class is uniquely determined by an element in the quasi-projective 
variety
\[
  \bigl(
   \p^1_{\C} \setminus \{ (1:0), (0:1), (1:1) \}
  \bigr)^{m-3}
  \setminus
  \left\{
  \begin{array}{c}
   (m-3)\text{-tuples having at least} \\
   \text{two equal components}
  \end{array}
  \right\} \,.
\]
This space is called the \emph{moduli space} of $m$-tuples of distinct points 
in~$\p^1_{\C}$, and is denoted by~$\mscr{M}_{0,m}$. One may notice 
that this space is not compact under the Euclidean topology (in more 
algebro-geometric terms, it is not complete), and this may be a problem when 
dealing with enumerative questions, as the one we address in this work. 
Because of this, researchers focused on finding \emph{compactifications} of 
these moduli spaces. A possible smooth compactification of the 
space~$\mscr{M}_{0,m}$, denoted~$\M_{0,m}$, was constructed by 
Knudsen~\cite{Knudsen1983} (see \cite{Mumford1965, Mumford1977, Gieseker1982} 
for a more general account on the topic). This construction introduces a 
\emph{boundary} for~$\mscr{M}_{0,m}$, constituted of particular curves, called 
\emph{stable curves}, which are essentially reducible curves whose irreducible 
components are rational curves, intersecting in nodes. More precisely, we have:

\begin{definition}[{\cite[Introduction]{Keel1992}}]
 A \emph{stable curve of genus zero with $m$ marked points} is a reduced, 
possibly reducible, curve~$C$ with at worst node singularities, 
together with $m$ distinct marked points $p_1, \dotsc, p_m$ on it such that:
\begin{itemize}
 \item the points $\{ p_i \}_{i=1}^{m}$ lie on the smooth locus of~$C$;
 \item each irreducible component of~$C$ is isomorphic to~$\p^1_{\C}$, and 
altogether all irreducible components form a tree;
 \item for each irreducible component of~$C$, the sum of the numbers of 
singular points and of marked points on that component is at least~$3$.
\end{itemize}
\end{definition}

The geometry of~$\M_{0,m}$ is rich and well-studied: we refer 
to~\cite[Introduction]{Keel1992}, \cite{Kapranov1993}, 
and~\cite[Chapter~1]{Kock2007} for a discussion.

As we are going to see in Section~\ref{algorithm}, our algorithm relies on the 
understanding of how subvarieties of~$\M_{0,m}$ intersect each other. This 
piece of information is encoded in the so-called \emph{Chow ring}, which is a 
standard object in intersection theory. For its definition and properties we 
refer to the introduction~\cite{Fulton1984}, or to the standard 
book~\cite{Fulton1998}.
 
For the reader's convenience, we briefly provide in 
Theorem~\ref{theorem:chow_ring} the description by Keel of the Chow ring 
of~$\M_{0,m}$. First, we need to introduce some particular divisors that Keel 
calls ``vital''. 

\begin{definition}[{\cite[Introduction]{Keel1992}}]
\label{definition:vital_divisor}
 Let $(I,J)$ be a partition of~$\{1, \dotsc, m\}$ where $|I| \geq 2$ and $|J| 
\geq 2$. We define the divisor~$D_{I,J}$ in~$\M_{0,m}$ to be the divisor whose 
general point is a stable curve with two irreducible components such that the 
marked points labeled by~$I$ lie on one component, while the marked points 
labeled by~$J$ lie on the other component.
\end{definition}

\begin{proposition}[Knudsen, {\cite[Introduction and Fact~$2$]{Keel1992}}]
\label{proposition:vital_product}
 Any divisor $D_{I,J}$ as in Definition~\ref{definition:vital_divisor} is smooth 
and it is isomorphic to the product~$\M_{0,|I|+1} \times \M_{0, |J| + 1}$; in 
the previous isomorphism, the point of intersection of the two components of a
general stable curve in~$D_{I,J}$ counts as an extra marked point in each of 
the two factors of the product.
\end{proposition}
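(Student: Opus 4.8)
Since the statement is the classical glueing (``clutching'') result of Knudsen, the plan is to exhibit an explicit isomorphism rather than to reprove the Chow-ring machinery; one could also simply invoke \cite[Fact~2]{Keel1992} and the references therein, but here is how I would argue directly. Write $n_I = |I|$ and $n_J = |J|$ and mark the ``extra'' point in each factor by~$\star$. First I would construct the glueing morphism
\[
 \gamma \colon \M_{0,\,n_I+1} \times \M_{0,\,n_J+1} \longrightarrow \M_{0,m}
\]
which, on points, sends a pair $\bigl((C',(p'_i)_{i\in I},p'_\star),\,(C'',(p''_j)_{j\in J},p''_\star)\bigr)$ to the curve obtained by glueing $C'$ and $C''$ transversally, identifying $p'_\star$ with $p''_\star$ to create a node, and keeping the remaining marked points labelled by $I \sqcup J = \{1,\dots,m\}$. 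The three stability conditions are inherited: every irreducible component of the glued curve is a component of~$C'$ or of~$C''$, the new node adds exactly one special point to each of the two components it meets (so the ``$\geq 3$'' condition is preserved), and the dual graph is again a tree. That $\gamma$ is a morphism of varieties, and not merely a map on closed points, is exactly Knudsen's glueing theorem: glueing two flat families of stable pointed curves along a pair of sections yields a flat family of stable pointed curves, functorially in the base.

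Next I would identify the image of~$\gamma$ with $D_{I,J}$. A general point of each factor is a smooth $\p^1_\C$ with distinct marked points, so the general point of the image is a two-component stable curve of exactly the shape of Definition~\ref{definition:vital_divisor}; since $\gamma$ is proper its image is closed, and it is irreducible because the source is, hence it equals the closure of that locus, which is $D_{I,J}$ by definition.

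It then remains to show $\gamma$ is a closed immersion onto $D_{I,J}$, for this finishes everything: $\M_{0,n}$ is a smooth projective variety, so the source is smooth, forcing $D_{I,J}$ to be smooth and isomorphic to the product. For the closed-immersion claim it suffices to check $\gamma$ is injective on closed points and unramified. Injectivity: any stable curve $C$ lying in $D_{I,J}$ has a \emph{unique} node~$\nu$ whose removal disconnects $C$ into a connected piece carrying all of~$I$ and a connected piece carrying all of~$J$ --- if two such nodes existed, the components of $C$ lying ``between'' them or ``hanging off'' them would carry no marked point and at most two special points, contradicting stability; existence holds on the generic member of $D_{I,J}$ and is stable under degeneration by a dual-tree inspection. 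Normalizing $C$ at~$\nu$ and placing~$\star$ at the two preimages of~$\nu$ recovers the pair in the product, so $\gamma$ is bijective onto $D_{I,J}$. Unramifiedness: a first-order deformation of a glued curve $\gamma(x)$ that stays inside $D_{I,J}$ must preserve the separating node~$\nu$ (deformations that smooth~$\nu$ move out of $D_{I,J}$), and such a deformation is uniquely a glueing of a first-order deformation of $(C',(p'_i),p'_\star)$ and one of $(C'',(p''_j),p''_\star)$, so $d\gamma$ is injective on tangent spaces to $D_{I,J}$.

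The main obstacle is the family-theoretic content concentrated in the first and last steps: verifying that clutching is an honest morphism of the (coarse) moduli spaces --- this is Knudsen's theorem --- and running the deformation-theoretic comparison showing $\gamma$ is unramified onto $D_{I,J}$. Everything else is bookkeeping on dual trees and is routine.
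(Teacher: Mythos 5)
The paper offers no proof of this proposition: it is quoted from Keel (who attributes it to Knudsen), so there is no in-paper argument to measure yours against. Your reconstruction follows the standard route and is essentially correct: you build the clutching morphism $\gamma\colon \M_{0,|I|+1}\times\M_{0,|J|+1}\to\M_{0,m}$, identify its image with $D_{I,J}$ via properness, irreducibility, and the dimension count, and upgrade the resulting bijection to an isomorphism by checking injectivity on closed points and on tangent vectors. Be aware, though, that the genuinely hard input --- that clutching is a morphism of families and not merely a map on closed points --- is exactly Knudsen's theorem, which you invoke as a black box; since that is precisely what the citation covers, this is acceptable, but your argument reduces the proposition to the cited literature rather than replacing it. Two small imprecisions are worth tightening, neither of which is a gap. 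In the uniqueness-of-the-separating-node step, it is not literally true that every component sitting between two separating nodes has at most two special points; the clean argument is a count: a middle subtree with $k$ components carrying no marked points and attached to the rest of the curve by exactly two edges has $2(k-1)+2=2k$ special points in total, which is less than the $3k$ required by stability, forcing $k=0$. In the unramifiedness step, the cleaner formulation is that a first-order deformation of the glued pointed curve restricts, via normalization at the preserved node, to first-order deformations of the two pointed branches, so if the image under $d\gamma$ of a tangent vector vanishes then both branch deformations vanish; one does not need to first argue that the deformation ``stays inside $D_{I,J}$'', which would presuppose knowing the tangent space of $D_{I,J}$ before its smoothness is established.
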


\begin{theorem}[{\cite[Introduction and Theorem~$1$]{Keel1992}}]
\label{theorem:chow_ring}
 The Chow ring of~$\M_{0,m}$ admits the following description:
\[
  \Z \bigl[
   D_{I,J} \,\colon\, 
   (I,J) \mathrm{\ is\ a\ partition\ of\ } \{1, \dotsc, m\} 
   \mathrm{\ where\ } |I| \geq 2 \mathrm{\ and\ } |J| \geq 2 
   \bigr] \, \Big/\sim\,,
\]
where the equivalence $\sim$ is given by the relations:
\begin{itemize}
 \item
  $D_{I,J} = D_{J,I}$,
 \item
  for any four distinct elements $a,b,c,d \in \{ 1, \dotsc, m \}$ we have
  \begin{equation}
  \tag{$\triangle$}
  \label{eq:relations}
   \sum_{\substack{a, b \in I \\ c, d \in J}} D_{I,J} = 
   \sum_{\substack{a, c \in I \\ b, d \in J}} D_{I,J} = 
   \sum_{\substack{a, d \in I \\ b, c \in J}} D_{I,J} \, ,
  \end{equation}
 \item
  $D_{I,J} \cdot D_{K,L} = 0$ unless one of the following holds:
  \[
   I \subseteq K, \mathrm{\ or\ } 
   K \subseteq I, \mathrm{\ or\ } 
   J \subseteq L, \mathrm{\ or\ } 
   L \subseteq J \,. 
  \]
\end{itemize}
Moreover, the three sums in Equation~\eqref{eq:relations} are the pullbacks 
of the respective divisors $D_{\{a,b\},\{c,d\}}$, $D_{\{a,c\},\{b,d\}}$, and 
$D_{\{a,d\},\{b,c\}}$ under the map 
\[
 \pi_{a,b,c,d} \colon \M_{0,m} \longrightarrow \M_{0,4} \cong \p^1_{\C}\,.
\]
Here, the map $\pi_{a,b,c,d}$ is the map 
that forgets all the marked points except for the ones labeled by $a$, $b$, 
$c$, and~$d$.
\end{theorem}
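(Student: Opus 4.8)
The statement is due to Keel; here is the line of argument one would follow. The proof splits naturally into three parts: (i) the vital divisors generate the Chow ring, so that the displayed polynomial ring surjects onto $A^*(\M_{0,m})$; (ii) the three families of relations are actually satisfied, so the surjection factors through the quotient by $\sim$; and (iii) the quotient map is injective, i.e.\ there are no further relations. Throughout, one exploits that $\M_{0,m}$ is smooth and that $A^*(\M_{0,m}) \cong H^{2*}(\M_{0,m})$ with vanishing odd cohomology.

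For (i) one uses the localization (excision) exact sequence in Chow theory for the open embedding $\mscr{M}_{0,m} \hookrightarrow \M_{0,m}$, whose complement is the boundary $\bigcup D_{I,J}$. The key input is that $A^*(\mscr{M}_{0,m}) = \Z$, concentrated in degree zero: this follows by induction on $m$ from the fibration $\mscr{M}_{0,m} \to \mscr{M}_{0,m-1}$ whose fibers are copies of $\p^1_\C$ with finitely many points removed, hence with trivial higher Chow groups, or equivalently from the fact that $\mscr{M}_{0,m}$ is a linear variety. Thus every class of positive codimension is supported on the boundary; writing such a class as $\iota_* \alpha$ for $\iota \colon D_{I,J} \hookrightarrow \M_{0,m}$ and using that $A^*(D_{I,J})$ is itself generated by restrictions of vital divisors (by induction, via the product decomposition $D_{I,J} \cong \M_{0,|I|+1}\times\M_{0,|J|+1}$ of Proposition~\ref{proposition:vital_product}, whose own boundary divisors are the intersections $D_{I,J}\cdot D_{K,L}$), the projection formula $\iota_*(\iota^*\beta)=\beta\cdot[D_{I,J}]$ shows that $\iota_*\alpha$ is a polynomial in the $D_{I,J}$. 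Hence the $D_{I,J}$ generate $A^*(\M_{0,m})$ as a ring, giving the desired surjection.

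For (ii): the relation $D_{I,J}=D_{J,I}$ is the definition. The product relation $D_{I,J}\cdot D_{K,L}=0$ for incompatible $(I,J)$, $(K,L)$ holds because the two boundary divisors are disjoint — a stable curve of genus zero cannot simultaneously acquire a node splitting the markings as $I\mid J$ and one splitting them as $K\mid L$ unless the two unordered partitions are laminar, by the tree structure of the dual graph. The triangle relation $(\triangle)$ is obtained by pulling back along $\pi_{a,b,c,d}\colon\M_{0,m}\to\M_{0,4}\cong\p^1_\C$: on $\p^1_\C$ the three boundary points are linearly equivalent (any two points of $\p^1_\C$ are), and one identifies the scheme-theoretic preimage of the point $D_{\{a,b\},\{c,d\}}$ with the reduced union $\sum_{a,b\in I,\ c,d\in J}D_{I,J}$ — a local computation at a generic point of each such $D_{I,J}$, where $\pi_{a,b,c,d}$ is flat and, on a transverse slice, looks like the identity in the $\M_{0,4}$-parameter, so the multiplicity is one. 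This simultaneously establishes $(\triangle)$ and the ``Moreover'' clause.

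For (iii), the completeness of the relations, one argues by induction on $m$, and this is the main obstacle. The base case $m=4$ is the check $\Z[D_1,D_2,D_3]/(D_i-D_j,\, D_iD_j)\cong\Z[h]/(h^2)=A^*(\p^1_\C)$. For the inductive step one uses Keel's description of $\M_{0,m}$ as an explicit iterated blow-up built from $\M_{0,m-1}$ (equivalently, Kapranov's realization of $\M_{0,m}$ as $\p^{m-3}_\C$ blown up successively along the strict transforms of the linear spans of subsets of $m-1$ general points), at each stage blowing up a smooth center along which the restriction of the Chow ring is surjective. Feeding this into the standard formula for the Chow ring of such a blow-up (Keel's lemma) and matching generators — the exceptional divisors of the successive blow-ups with the ``new'' vital divisors involving the forgotten point, the pulled-back divisors with the ``old'' ones — one verifies that the presentation for $\M_{0,m-1}$ propagates exactly to the claimed presentation for $\M_{0,m}$. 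The bookkeeping is substantial: one must track how each relation transforms under blow-up and confirm the normal-bundle (self-intersection) data for each $D_{I,J}$ predicted by Proposition~\ref{proposition:vital_product}. An alternative to the explicit blow-up bookkeeping is to prove that the presented ring has the same finite rank in each degree as $A^*(\M_{0,m})$ — for which the Betti numbers of $\M_{0,m}$, computed via the same inductive geometry or by point-counting over finite fields together with the vanishing of odd cohomology, suffice — which, combined with the surjection from (i)–(ii) and a torsion-freeness check, forces the isomorphism.
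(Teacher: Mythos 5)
This theorem is not proved in the paper at all: it is quoted verbatim from Keel (\cite[Introduction and Theorem~1]{Keel1992}), and the section explicitly states that no new results are presented there. So there is no internal proof to compare against; what you have written is an outline of Keel's own published argument, and as such it is essentially accurate. Your part (i) (generation of $A^*(\M_{0,m})$ by boundary divisors via $A^*(\mscr{M}_{0,m})=\Z$, excision, the product structure of $D_{I,J}$ from Proposition~\ref{proposition:vital_product}, and the projection formula) and part (ii) (disjointness of incompatible boundary divisors from the tree structure of the dual graph, and the relation $(\triangle)$ together with the ``Moreover'' clause obtained by pulling back linear equivalence of points on $\M_{0,4}\cong\p^1_{\C}$ and checking multiplicity one) are both correct and standard. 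Part (iii) is where all the substance of Keel's theorem lies, and there your text is a plan rather than a proof: the inductive blow-up description of $\M_{0,m}$, the identification of exceptional and proper-transform divisors with vital divisors, the normal-bundle computations feeding into the blow-up formula for Chow rings, and the verification that the presentation propagates are precisely the content of Keel's paper and are not reproduced by naming them. The alternative you mention (matching graded ranks of the presented ring against the Betti numbers of $\M_{0,m}$) is also viable in principle but quietly requires computing the rank of the abstract quotient ring, which is itself nontrivial combinatorics. For the purposes of this paper, citing Keel as the authors do is the right call; your sketch is a faithful road map of his proof but should not be mistaken for a self-contained one.
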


\section{The algorithm}
\label{algorithm}

At the end of Section~\ref{realizations} we understood that realizations on the 
sphere up to~$\SO_3(\C)$ can be considered as elements of a moduli space. In 
particular, configurations of $n$ points on the complex sphere up 
to~$\SO_3(\C)$ correspond to configurations of $2n$ points on a rational 
curve up to~$\pgl_2(\C)$, i.e., elements in the moduli space of rational 
curves with marked points. Moreover, assigning angles between two points on a 
sphere corresponds to assigning the cross-ratio of the $4$-tuple constituted of 
the left and right lifts of those two points. The elements of the moduli 
space, for which the cross-ratio of $4$ marked points is prescribed, are fibers 
of the map
\[
 \M_{0,2n} \longrightarrow \M_{0,4} \cong \p^1_{\C}
\]
that forgets all but the $4$ considered marked points (see 
Theorem~\ref{theorem:chow_ring}). Hence, the elements of the moduli space we 
are interested in, namely the ones for which the cross-ratios are specified for 
the $4$-tuples arising from edges of a graph, are fibers of products of these 
maps. We give a name to these maps:

\begin{definition}
\label{definition:map}
 Let $G = (V,E)$ be a graph, and suppose that $V = \{1, \dotsc, n\}$. We 
define the morphism
\[
 \Phi_{G} \colon 
 \M_{0,2n} \longrightarrow 
 \prod_{\{a,b\} \in E} \M_{0,4}^{a, b, a+n, b+n}
\]
whose components are the maps $\pi_{a,b,a+n,b+n} \colon \M_{0,2n} 
\longrightarrow \M_{0,4}$ forgetting all but $4$ marked points and defined in 
Theorem~\ref{theorem:chow_ring}. The choice for the indices reflects the 
labeling of the marked points on the rational curves by $(P_1^{l}, \dotsc, 
P_n^{l}, P_1^{r}, \dotsc, P_n^{r})$, putting first the points corresponding to 
left lifts, and then the points corresponding to right lifts of points on the 
sphere. The translation we operated in Section~\ref{realizations} tells us that 
$\Phi_G$ is a dominant morphism between smooth varieties of the same dimension, 
so its general fibers are constituted of finitely many points.
\end{definition}

\begin{remark}
 For any Laman graph~$G$ with $n$ vertices, the image of the boundary 
of~$\M_{0,2n}$ under~$\Phi_G$ is a proper subvariety of $\prod_{\{a,b\} \in E} 
\M_{0,4}^{a, b, a+n, b+n}$. This means that a general fiber of~$\Phi_G$ will not 
intersect the boundary, and so it is constituted of classes of rational curves 
with $2n$ distinct marked points. Each of such rational curves is then 
isomorphic to the absolute conic~$A$, and the $2n$ marked points determine a 
realization of~$G$ on the sphere. Moreover, when $G$ is a Laman graph, a general 
fiber of~$\Phi_G$ is a complete intersection in~$M_{0,2n}$ and it is constituted 
by reduced points. 
\end{remark}

The discussion so far proves the following theorem.

\begin{theorem}
\label{theorem:equivalence}
 Given a Laman graph $G$, then the number of realizations 
of~$G$ in~$S^2_{\C}$ for a general assignment of spherical distances for its 
edges, up to~$\SO_3(\C)$, equals the cardinality of a general fiber of the 
map~$\Phi_G$ as in Definition~\ref{definition:map}.
\end{theorem}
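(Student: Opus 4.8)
The plan is to assemble Lemma~\ref{lemma:equivalence},
Proposition~\ref{proposition:translation}, Remark~\ref{remark:distinct} and the
remark following Definition~\ref{definition:map} into an explicit bijection
between the two sides of the asserted equality. Fix a general assignment
$\lambda\colon E\to\C$ of spherical distances. By the discussion of
Section~\ref{realizations}, the realizations of~$G$ compatible with~$\lambda$,
taken modulo~$\SO_3(\C)$, form a general fiber of~$\Psi_{S^2_{\C}}$; this fiber
is finite and its cardinality does not depend on the general choice of~$\lambda$,
so it is exactly the number to be computed. The goal is to match it point for
point with a general fiber of~$\Phi_G$.

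First I would build the forward map. To a realization $\vec P=(P_1,\dots,P_n)$
associate the $2n$-tuple $(P_1^l,\dots,P_n^l,P_1^r,\dots,P_n^r)$ of its left and
right lifts on the absolute conic~$A$; since $A$ is a rational curve it is
isomorphic to~$\p^1_{\C}$, and once the $2n$ points are pairwise distinct this
tuple represents a point of the interior $\mscr{M}_{0,2n}\subseteq\M_{0,2n}$. For
general~$\lambda$, Remark~\ref{remark:distinct} guarantees the distinctness, and
Proposition~\ref{proposition:translation} says that two realizations lie in the
same $\SO_3(\C)$-orbit precisely when their lift-tuples lie in the same
$\pgl_2(\C)$-orbit, i.e.\ define the same point of $\mscr{M}_{0,2n}$. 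Hence the
lift map descends to a well-defined injection from the set of realizations
compatible with~$\lambda$ modulo~$\SO_3(\C)$ into~$\mscr{M}_{0,2n}$. Moreover, by
the labeling convention recorded in Definition~\ref{definition:map}, for each edge
$\{a,b\}\in E$ the value of $\pi_{a,b,a+n,b+n}$ at the lift-tuple is, under the
identification $\M_{0,4}\cong\p^1_{\C}$, the cross-ratio of $P_a^l,P_b^l,P_a^r,P_b^r$,
which by Lemma~\ref{lemma:equivalence} is the image of
$d_{S^2}(P_a,P_b)=\lambda(\{a,b\})$ under a fixed automorphism of
$\M_{0,4}\cong\p^1_{\C}$ (the M\"obius transformation combining the reordering of
the four arguments with the relation stated in the lemma). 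Denote this value by
$\mu(\{a,b\})$. Then the image of the lift map is contained in the fiber of
$\Phi_G$ over the point $\bigl(\mu(\{a,b\})\bigr)_{\{a,b\}\in E}$ of
$\prod_{\{a,b\}\in E}\M_{0,4}^{a,b,a+n,b+n}$.

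Next I would invert this. Edgewise, $\lambda(\{a,b\})\mapsto\mu(\{a,b\})$ is an
isomorphism $\p^1_{\C}\to\p^1_{\C}$, so a general~$\lambda$ produces a general point
$\bigl(\mu(\{a,b\})\bigr)$ of the target product. By the remark following
Definition~\ref{definition:map}, a general fiber of~$\Phi_G$ is then disjoint from
the boundary of~$\M_{0,2n}$, is finite and reduced, and consists of
$\pgl_2(\C)$-classes of $2n$ distinct points on a rational curve isomorphic to~$A$;
that same remark records that each such class determines a realization of~$G$ on the
sphere. Reading Lemma~\ref{lemma:equivalence} backwards, together with the
definition of~$\mu$, this realization satisfies $d_{S^2}(P_a,P_b)=\lambda(\{a,b\})$
for every edge, so it is compatible with~$\lambda$, and by
Proposition~\ref{proposition:translation} it is well defined up to~$\SO_3(\C)$. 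This
assignment is inverse to the lift map, so the two are mutually inverse bijections;
comparing cardinalities, and using that a general fiber of~$\Phi_G$ is finite as
noted in Definition~\ref{definition:map}, yields the theorem.

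The part that needs genuine attention, beyond bookkeeping the cited results, is
checking that a single dense open set of assignments~$\lambda$ makes everything work
at once: one needs (i) distinctness of all $2n$ lifts, from
Remark~\ref{remark:distinct}; (ii) that the corresponding point
$\bigl(\mu(\{a,b\})\bigr)$ avoids the proper closed locus in
$\prod_{\{a,b\}\in E}\M_{0,4}^{a,b,a+n,b+n}$ over which the fiber of~$\Phi_G$ touches
the boundary of~$\M_{0,2n}$, is non-reduced, or acquires positive dimension; and
(iii) that the reconstruction in the reverse direction returns points of~$\SP^2_{\C}$
that actually lie on~$S^2_{\C}$, i.e.\ not on the absolute conic. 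Each of these is an
open dense condition on~$\lambda$, since the excluded loci are proper closed subsets,
and a finite intersection of dense opens is dense; granting this, the rest of the
argument is the formal combination of Lemma~\ref{lemma:equivalence},
Proposition~\ref{proposition:translation}, Remark~\ref{remark:distinct}, and the
remark following Definition~\ref{definition:map}.
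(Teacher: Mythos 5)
Your proposal is correct and follows essentially the same route as the paper, which gives no separate proof but simply states that ``the discussion so far'' --- namely Lemma~\ref{lemma:equivalence}, Proposition~\ref{proposition:translation}, Remark~\ref{remark:distinct}, and the remark after Definition~\ref{definition:map} --- establishes the theorem. You have merely made explicit the bijection (via lifts to the absolute conic and its inverse on fibers avoiding the boundary) that the paper leaves implicit; your worry~(iii) is in fact automatic, since distinctness of $P_i^l$ and $P_i^r$ already forces the reconstructed point off the plane at infinity.
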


In the light of Theorem~\ref{theorem:equivalence}, our goal becomes, given a 
Laman graph~$G$, to compute the cardinality of a general fiber of~$\Phi_G$. 

\begin{remark}
\label{remark:fiber_class}
 Let us focus on how the class in the Chow ring of a fiber of~$\Phi_G$ looks 
like. Since the codomain of~$\Phi_G$ is a product, we can express a fiber as the 
intersection of the fibers of the maps to each component of the product. In 
other words, if we fix $\Lambda \in \prod_{\{a,b\} \in E} \M_{0,4}^{a, b, a+n, 
b+n}$ and we denote by~$\Phi_{a,b}$ the composition of~$\Phi_G$ with the 
projection to the factor~$\M_{0,4}^{a, b, a+n, b+n}$, then we have that
\[
 \Phi_G^{-1}(\Lambda) = \bigcap_{\{a,b\} \in E}
\Phi_{a,b}^{-1}(\Lambda_{a,b}) \,.
\]
It follows then, using the notation of Theorem~\ref{theorem:chow_ring}, that 
the class of~$\Phi_G^{-1}(\Lambda)$ in the Chow ring of~$\M_{0,2n}$ is given by
\[
 \prod_{\{a,b\} \in E} \sum_{\substack{a, b \in I \\ a+n, b+n \in J}} D_{I,J} \,.
\]
\end{remark}

Since a general fiber of~$\Phi_G$ is a reduced complete intersection 
in~$\M_{0,2n}$ when $G$ is a Laman graph, its cardinality is the degree of its 
Chow class. Hence we get:

\begin{proposition}
\label{proposition:degree}
 Given a Laman graph~$G=(V,E)$ with $V = \{1, \dotsc, n\}$, the number of 
realizations of~$G$ in~$S^2_{\C}$ for a general assignment of spherical 
distances for its edges, up to~$\SO_3(\C)$, equals
\[
 \deg 
 \lowerparen{6pt}{
  \prod_{\{a,b\} \in E} 
  \sum_{\substack{a, b \in I \\ a+n, b+n \in J}} 
  D_{I,J} 
 } \,.
\]
\end{proposition}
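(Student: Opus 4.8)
The plan is to chain together the facts already established. By Theorem~\ref{theorem:equivalence}, the number we want to compute --- the number of realizations of $G$ on $S^2_{\C}$ for a general assignment of spherical distances, up to $\SO_3(\C)$ --- equals the cardinality of a general fiber $\Phi_G^{-1}(\Lambda)$ of the morphism $\Phi_G$ of Definition~\ref{definition:map}. So it is enough to prove that, for general $\Lambda$, this cardinality equals the displayed degree; the rest of the argument is bookkeeping with intersection theory on the smooth projective variety $\M_{0,2n}$.

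First I would recall, from the remark following Definition~\ref{definition:map}, that since $G$ is a Laman graph a general fiber $\Phi_G^{-1}(\Lambda)$ is a reduced, zero-dimensional complete intersection in $\M_{0,2n}$. Indeed, by Remark~\ref{remark:fiber_class} it equals $\bigcap_{\{a,b\}\in E}\Phi_{a,b}^{-1}(\Lambda_{a,b})$, the intersection of the $|E|$ divisors $\Phi_{a,b}^{-1}(\Lambda_{a,b})$, and this intersection has the expected dimension zero because $\Phi_G$ is dominant between smooth varieties of the same dimension. For such a scheme one applies the standard principle: the number of points of a reduced zero-dimensional subscheme of a smooth projective variety equals the degree of its fundamental class in the Chow ring, that is, its image under the pushforward to a point; and since the intersection is proper --- the $|E|$ divisors meet in codimension $|E|$ --- the class of the fiber is the product $\prod_{\{a,b\}\in E}\bigl[\Phi_{a,b}^{-1}(\Lambda_{a,b})\bigr]$ of the classes of the individual divisors. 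Combining these, $\#\,\Phi_G^{-1}(\Lambda)=\deg\bigl(\prod_{\{a,b\}\in E}\bigl[\Phi_{a,b}^{-1}(\Lambda_{a,b})\bigr]\bigr)$.

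It then remains only to identify each factor. By construction $\Phi_{a,b}=\pi_{a,b,a+n,b+n}\colon\M_{0,2n}\to\M_{0,4}\cong\p^1_{\C}$, so $\Phi_{a,b}^{-1}(\Lambda_{a,b})$ is the preimage of a point of $\p^1_{\C}$; since all points of $\p^1_{\C}$ are rationally equivalent, its class is the pullback along $\pi_{a,b,a+n,b+n}$ of the class of the point $D_{\{a,b\},\{a+n,b+n\}}\in\M_{0,4}$, which by the last assertion of Theorem~\ref{theorem:chow_ring} equals $\sum_{\substack{a,b\in I\\ a+n,b+n\in J}}D_{I,J}$. Substituting into the previous display yields $\#\,\Phi_G^{-1}(\Lambda)=\deg\bigl(\prod_{\{a,b\}\in E}\sum_{\substack{a,b\in I\\ a+n,b+n\in J}}D_{I,J}\bigr)$, which is the claim. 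I expect the only delicate point to be the middle step: one must know that the $|E|$ divisors intersect properly, so that the refined intersection product coincides with the naive product of their classes and no excess-intersection term appears, and that the fiber is reduced, so that each point contributes exactly $1$ to the degree --- both of which are supplied by the Laman hypothesis via the cited remark, after which the proof is just a correct chaining of the results recalled above.
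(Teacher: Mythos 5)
Your proof is correct and follows essentially the same route as the paper: it chains Theorem~\ref{theorem:equivalence} with the computation of the fiber class in Remark~\ref{remark:fiber_class} (via the pullback description of $\sum D_{I,J}$ in Theorem~\ref{theorem:chow_ring}) and the fact that a general fiber is a reduced complete intersection, so its cardinality is the degree of its Chow class. The extra care you take about properness of the intersection and absence of excess terms is exactly the point the paper disposes of with the Laman hypothesis.
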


Algorithm \hyperref[alg:CountRealizations]{\texttt{CountRealizations}} computes 
the degree in Proposition~\ref{proposition:degree} using the description of 
vital divisors~$D_{I,J}$ provided by Proposition~\ref{proposition:vital_product} 
and the relations in the Chow ring stated in Theorem~\ref{theorem:chow_ring}. In 
fact, if we fix an edge~$\{a_0, b_0\} \in E$ of a Laman graph $G = (V, E)$, then 
the class we want to compute is
\begin{equation}
\label{equation:select}
 \lowerparen{6pt}{
  \sum_{\substack{a_0, b_0 \in I_0 \\ 
   a_0 + n, b_0 + n \in J_0}} D_{I_0, J_0}
 } \cdot
 \underbrace{
 \lowerparen{6pt}{
  \prod_{\{a,b\} \in E \setminus \{a_0, b_0\}} 
  \sum_{\substack{a, b \in I \\ a+n, b+n \in J}} D_{I,J} 
 }}_{=: F_{a_0, b_0}} \,.
\end{equation}
For every $(I_0, J_0)$ such that $a_0, b_0 \in I_0$ 
and $a_0 + n, b_0 + n \in J_0$, the product $D_{I_0, J_0} 
\cdot F_{a_0, b_0}$ can be computed by restricting~$F_{a_0, 
b_0}$ to~$D_{I_0, J_0}$ and using the isomorphism $D_{I_0, 
J_0} \cong \M_{0, I_0 \cup \{\ast\}} \times \M_{0,J_0 \cup 
\{\ast\}}$. We show that both the restrictions of~$F_{a_0, 
b_0}$ to~$\M_{0, I_0 \cup \{\ast\}}$ and to~$\M_{0, J_0 \cup 
\{\ast\}}$ have the same structure of the initial class we wanted to compute, 
and this determines a recursive procedure to solve our task.

To clarify the recursive procedure, let us start by noticing that the class
\begin{equation}
\label{equation:class}
  \prod_{\{a,b\} \in E} 
  \sum_{\substack{a, b \in I \\ a+n, b+n \in J}} 
  D_{I,J}
\end{equation}
from Proposition~\ref{proposition:degree} is a particular instance of a more 
general construction. The construction works as follows: we start from a 
set~$N$, and a set~$Q$ of $4$-tuples of distinct elements of~$N$. Then 
we form the following class in the Chow ring of~$\M_{0,|N|}$: 
\begin{equation}
\label{equation:class_combinatorial}
 \mcal{A}_{N,Q} := 
 \prod_{\substack{q \in Q \\ q = (a,b,c,d)}} \;
 \sum_{\substack{a, b \in I \\ c, d \in J}}
 D_{I,J} \,.
\end{equation}
Notice that if we set $N_0 := \{1, \dotsc, 2n\}$ and $Q_0 := 
\mbox{$\{ (a,b,a+n,b+n) \, \colon \, \{a, b\} \in E\}$}$, 
then the class from Equation~\eqref{equation:class} equals $\mcal{A}_{N_0, 
Q_0}$. In this perspective, also the class~$F_{a_0, b_0}$ from 
Equation~\eqref{equation:select} can be seen as a particular case of a general 
construction: what we do here is to select some $\bar{q} \in Q$, with $\bar{q} = 
(\bar{a}, \bar{b}, \bar{c}, \bar{d})$, and to define the class 
\[
 \mcal{G}_{\bar{q}} := 
 \prod_{\substack{q \in Q \setminus \{ \bar{q} \} \\ q = (a,b,c,d)}} \;
 \sum_{\substack{a, b \in I \\ c, d \in J}} D_{I,J} \,,
\]
so that we get the factorization
\begin{equation}
\label{equation:general_split}
 \mcal{A}_{N,Q} =  
 \lowerparen{6pt}{
  \sum_{\substack{\bar{a}, \bar{b} \in \bar{I} \\ \bar{c}, \bar{d} \in J}} D_{\bar{I},\bar{J}} 
 } \cdot \mcal{G}_{\bar{q}} \,.
\end{equation}
With this notation, the class $F_{a_0, b_0}$ equals $\mcal{G}_{q_0}$, where $q_0 = (a_0, b_0, a_0 + n, b_0 + n)$.

We show now that we can set up an iterative procedure for the computation of the 
degree of classes of type~$\mcal{A}_{N,Q}$. Notice that, taking into account 
Equation~\eqref{equation:general_split}, this can be achieved once we are able 
to compute the degree of a product $D_{\bar{I}, \bar{J}} \cdot 
\mcal{G}_{\bar{q}}$. We then fix $\bar{q} \in Q$ with $\bar{q} = (\bar{a}, 
\bar{b}, \bar{c}, \bar{d})$ and we select a pair $(\bar{I}, \bar{J})$ such that 
$\bar{a}, \bar{b} \in \bar{I}$ and $\bar{c}, \bar{d} \in \bar{J}$. If there 
exists $q \in Q \setminus \{\bar{q}\}$ such that $|q \cap \bar{I}| = |q \cap 
\bar{J}| = 2$, then the restriction of~$\mcal{G}_{\bar{q}}$ to $D_{\bar{I}, 
\bar{J}}$ is zero by \cite[Fact 2]{Keel1992}. Otherwise, the restriction 
of~$\mcal{G}_{\bar{q}}$ to $D_{\bar{I}, \bar{J}} \cong \M_{0, \bar{I} \cup 
\{\ast\}} \times \M_{0, \bar{J} \cup \{\ast\}}$ is the product of two 
classes~$\mcal{G}_{\bar{q}}^{\bar{I}}$ and~$\mcal{G}_{\bar{q}}^{\bar{J}}$. 
Recall, in fact, that the Chow ring of $\M_{0, \bar{I} \cup \{\ast\}} \times 
\M_{0, \bar{J} \cup \{\ast\}}$ is the tensor product of the Chow rings 
of~$\M_{0, \bar{I} \cup \{\ast\}}$ and~$\M_{0, \bar{J} \cup \{\ast\}}$ by 
\cite[Theorem~2]{Keel1992}. Analyzing the isomorphism making~$D_{\bar{I}, 
\bar{J}}$ into a product (see \cite[Fact~2]{Keel1992} and 
\cite[Theorem~3.7]{Knudsen1983}), one sees that the two 
classes~$\mcal{G}_{\bar{q}}^{\bar{I}}$ and~$\mcal{G}_{\bar{q}}^{\bar{J}}$ admit 
the following description. For $k \in \{ 0, \dotsc, 4 \}$, define the sets:
\[
 Q_{k, 4-k} :=
 \bigl\{
  q \in Q \setminus \{ \bar{q} \} 
  \, \colon \, 
  |q \cap \bar{I}| = k
 \bigr\}
 =
 \bigl\{
  q \in Q \setminus \{ \bar{q} \} 
  \, \colon \, 
  |q \cap \bar{J}| = 4-k
 \bigr\} \,.
\]
Notice that, by definition, all tuples in $Q_{3,1}$ have exactly one element 
in~$\bar{J}$. Define $Q_{3,1}'$ to be set obtained by substituting in all 
$4$-tuples of~$Q_{3,1}$ their element in~$\bar{J}$ by the new symbol~$\ast$. 
Analogously, define~$Q_{1,3}'$. Then the classes~$\mcal{G}_{\bar{q}}^{\bar{I}}$ 
and~$\mcal{G}_{\bar{q}}^{\bar{J}}$ are
\[
 \prod_{\substack{q \in Q_{4,0} \cup Q_{3,1}' \\ q=(a,b,c,d)}} \;
 \sum_{\substack{a,b\in I \\ c,d \in J}} D_{I,J}
 \quad \text{and} \quad
 \prod_{\substack{q \in Q_{0,4} \cup Q_{1,3}' \\ q=(a,b,c,d)}} \;
 \sum_{\substack{a,b\in I \\ c,d \in J}} D_{I,J} \,,
\]
where the $D_{I,J}$ are divisors in the appropriate moduli spaces, namely for 
$\mcal{G}_{\bar{q}}^{\bar{I}}$ they are divisors in~$\M_{0,|\bar{I} \cup 
\{\ast\}|}$, while for $\mcal{G}_{\bar{q}}^{\bar{J}}$ they are divisors 
in~$\M_{0,|\bar{J} \cup \{\ast\}|}$.

Hence, in the notation of Equation~\eqref{equation:class_combinatorial}, if we set 
\begin{align*}
 N_{\bar{I}} &:= \bar{I} \cup \{\ast\},  & Q_{\bar{I}} &:=  Q_{4,0} \cup Q_{3,1}',\\
 N_{\bar{J}} &:= \bar{J} \cup \{\ast\},  & Q_{\bar{J}} &:=  Q_{0,4} \cup Q_{1,3}',
\end{align*}
we have
\[
 \mcal{G}_{\bar{q}}^{\bar{I}} = \mcal{A}_{N_{\bar{I}}, Q_{\bar{I}}}
 \quad \text{and} \quad
 \mcal{G}_{\bar{q}}^{\bar{J}} = \mcal{A}_{N_{\bar{J}}, Q_{\bar{J}}} \,.
\]
Therefore, the degree of a class~$\mcal{A}_{N,Q}$ can be computed by fixing an 
element $\bar{q} \in Q$, $\bar{q} = (\bar{a}, \bar{b}, \bar{c}, \bar{d})$, and 
using Equation~\eqref{equation:general_split}, thus obtaining the formula 
\[
 \deg \mcal{A}_{N,Q} =  
  \sum_{\substack{\bar{a}, \bar{b} \in \bar{I}\\ \bar{c}, \bar{d} \in \bar{J}}} 
  \left(
  \deg \mcal{A}_{N_{\bar{I}}, Q_{\bar{I}}} \cdot 
  \deg \mcal{A}_{N_{\bar{J}}, Q_{\bar{J}}} 
  \right) \,,
\]
where it is intended that a summand is zero if the corresponding set $Q_{2,2}$ 
is not empty. This allows one to set up a recursive procedure for the 
computation of the degree of a class~$\mcal{A}_{N, Q}$---so in particular of the 
class of a fiber of~$\Phi_G$. The recursion stops if we reach one of these 
situations:
\begin{itemize}
 \item The set $Q_{2,2}$ is not empty: in this case we can skip the 
contribution given by this class, since its degree is zero.
 \item The set $N$ is composed of four elements, and $Q$ consists of a single 
tuple: in this case the degree of the class is~$1$.
 \item The cardinality of $Q_{4,0} \cup Q_{3,1}'$ is different from $|\bar{I} 
\cup \{\ast\}| - 3$ or the cardinality of~$Q_{0,4} \cup Q_{1,3}'$ is different 
from $|\bar{J} \cup \{\ast\}| - 3$: in this case either $\mcal{G}_{\bar{q}}^{\bar{I}}$ or $\mcal{G}_{\bar{q}}^{\bar{J}}$ is zero, and so this contribution can be skipped.
\end{itemize}
The discussion so far proves the correctness of Algorithm 
\hyperref[alg:CountRealizations]{\texttt{CountRealizations}}.
Termination is implied by the fact that the size of the sets always decreases and therefore, the base cases are reached.

\renewcommand{\thealgorithm}{} 
\begin{algorithm}
\caption{\texttt{CountRealizations}}\label{alg:CountRealizations}
\begin{algorithmic}[1]
  \Require A pair $(N, Q)$, where $N$ is a set and $Q$ is a list of $4$-tuples 
of elements of~ $N$.
  \Ensure A natural number. When, for a Laman graph $G=(V,E)$, 
with $V=\range{1}{n}$, we have $N=\range{1}{2n}$ and $Q=\{ 
(a,b,a+n,b+n) \, \colon \, \{a,b\}\in E\}$, then this natural number 
represents the number of realizations of~$G$ on the complex sphere, up 
to~$\SO_3(\C)$.
  \Statex
  \If{($|N| = 4$ and $|Q| = 1$) or ($|N| = 3$ and $|Q| = 0$)}
    \State \Return 1
  \EndIf
  \State {\bfseries Select} any element $\bar{q} \in Q$ and {\bfseries write} 
$\bar{q} = (\bar{a},\bar{b},\bar{c},\bar{d})$.
  \State {\bfseries Set} $Q' := Q \setminus \{\bar{q}\}$ and  $N' := N 
\setminus \{\bar{a},\bar{b},\bar{c},\bar{d}\}$.
  \State {\bfseries Compute} $\mscr{L} := \{ \mathrm{subsets\ of\ } N' \}$.
  \State {\bfseries Set} $S := \emptyset$.
  \For{each subset $L \in \mscr{L}$}
    \State {\bfseries Set} $\bar{I} := \{\bar{a},\bar{b}\} \cup L$ and $\bar{J} := \mathtt{complement\ of\ } \bar{I} \mathtt{\ in\ } N$. 
    \State {\bfseries Append} $(\bar{I},\bar{J})$ to $S$. 
  \EndFor
  \State {\bfseries Set} $\mathtt{sum} := 0$.
  \For{each pair $(\bar{I}, \bar{J})$ in $S$}
    \State {\bfseries Compute} the following five lists:
    \begin{gather*}
     Q_{4,0} := 
     \bigl\{ 
      q \in Q' \, \colon \, | q \cap \bar{I} | = 4 
     \bigr\} = 
     \bigl\{
      q \in Q' \, \colon \, |q \cap \bar{J} | = 0 
     \bigr\} \, , \\
     Q_{3,1} := 
     \bigl\{ 
      q \in Q' \, \colon \, | q \cap \bar{I} | = 3 
     \bigr\} = 
     \bigl\{
      q \in Q' \, \colon \, |q \cap \bar{J} | = 1 
     \bigr\} \, , \\
     Q_{2,2} := 
     \bigl\{ 
      q \in Q' \, \colon \, | q \cap \bar{I} | = 2 
     \bigr\} = 
     \bigl\{
      q \in Q' \, \colon \, |q \cap \bar{J} | = 2 
     \bigr\} \, , \\
     Q_{1,3} := 
     \bigl\{ 
      q \in Q' \, \colon \, | q \cap \bar{I} | = 1
     \bigr\} = 
     \bigl\{
      q \in Q' \, \colon \, |q \cap \bar{J} | = 3 
     \bigr\} \, , \\
     Q_{0,4} := 
     \bigl\{ 
      q \in Q' \, \colon \, | q \cap \bar{I} | = 0 
     \bigr\} = 
     \bigl\{
      q \in Q' \, \colon \, |q \cap \bar{J} | = 4
     \bigr\} \, .
    \end{gather*}
    \If{$|Q_{2,2}| > 0$}
      \State {\textbf{Continue}}
    \EndIf
    \State {\bfseries Let} $\ast$ be a new symbol, not belonging to~$N$.
    \State {\bfseries Set} $Q'_{3,1} := \emptyset$ and $Q'_{1,3} := \emptyset$.
    \For{each element $q \in Q_{3,1}$}
     \State {\bfseries Substitute} in $q$ the element $q \cap \bar{J}$ with $\ast$.
     \State {\bfseries Append} the resulting tuple to $Q'_{3,1}$.
    \EndFor
    \State {\bfseries Apply} the analogous procedure to the elements of $Q_{1,3}$, 
obtaining $Q'_{1,3}$.
    \If{$|Q_{4,0} \cup Q_{3,1}'| \neq |\bar{I} \cup \{\ast\}| - 3$ or $|Q_{4,0} \cup Q_{1,3}'| \neq | \bar{J} \cup \{\ast\}| - 3$}
      \State {\textbf{Continue}}
    \EndIf
    \State {\bfseries Update} (here \texttt{CR} stands for 
\texttt{CountRealizations})
    \[
    \mathtt{sum} := \mathtt{sum} + \mathtt{CR} \bigl( \bar{I} \cup \{ \ast \} \,,\, 
Q_{4,0} \cup Q'_{3,1} \bigr) \cdot \mathtt{CR} \bigl( \bar{J} \cup \{ \ast \} \,,\, 
Q_{0,4} \cup Q'_{1,3} \bigr)
    \]
  \EndFor
  \State \Return \texttt{sum}.
\end{algorithmic}
\end{algorithm}

\section{Computed Data}
\label{data}

Using Algorithm~\hyperref[alg:CountRealizations]{\texttt{CountRealizations}} we 
computed the number of realizations on the sphere of all Laman graphs with up 
to~$10$ vertices. Table~\ref{tab:max} lists those graphs that have the maximal 
number of realizations on the sphere within the class of graphs with the same 
number of vertices.

\begin{table}[ht]
  \begin{center}
		\caption{Graphs with maximal number of complex realizations on the sphere within 
graphs of $n$ vertices. $\Lam_2$ denotes the number of complex realizations in the 
plane.}
    \begin{tabular}{m{0.9cm}m{7cm}cc}
			\toprule
      n & Graph(s) & \#realizations & $\Lam_2$\\
      \midrule
      \addlinespace[10pt]
      5 & 
      \begin{tikzpicture}[scale=0.8]
				\node[vertex] (1) at (0.00434626, 0.) {};
				\node[vertex] (2) at (0., 1.46946) {};
				\node[vertex] (3) at (1.71639, 0.737365) {};
				\node[vertex] (4) at (0.708363, 0.451834) {};
				\node[vertex] (5) at (0.708156, 1.02237) {};
				\draw[edge] (1)edge(4) (1)edge(5) (2)edge(4) (2)edge(5) (3)edge(4) (3)edge(5) (4)edge(5);
			\end{tikzpicture}
			\quad
			\begin{tikzpicture}[scale=0.8]
				\node[vertex] (1) at (2.19943, 0.770596) {};
				\node[vertex] (2) at (0., 0.770235) {};
				\node[vertex] (3) at (0.572073, 0.) {};
				\node[vertex] (4) at (1.62787, 0.000163602) {};
				\node[vertex] (5) at (1.09881, 0.729856) {};
				\draw[edge] (1)edge(4) (1)edge(5) (2)edge(3) (2)edge(5) (3)edge(4) (3)edge(5) (4)edge(5);
			\end{tikzpicture}
			\quad
			\begin{tikzpicture}[scale=0.8]
				\node[vertex] (1) at (0., 0.489214) {};
				\node[vertex] (2) at (1.77414, 0.846507) {};
				\node[vertex] (3) at (1.77498, 0.132723) {};
				\node[vertex] (4) at (0.943388, 0.) {};
				\node[vertex] (5) at (0.942996, 0.978813) {};
				\draw[edge] (1)edge(4) (1)edge(5) (2)edge(3) (2)edge(4) (2)edge(5) (3)edge(4) (3)edge(5);
			\end{tikzpicture}
			 & 8 & 8|8|8\\[6ex]
      6 & 
      \begin{tikzpicture}[scale=0.8]
				\node[vertex] (1) at (0., 1.11908) {};
				\node[vertex] (2) at (1.71676, 0.586005) {};
				\node[vertex] (3) at (1.13954, 0.) {};
				\node[vertex] (4) at (0.0000860155, 0.0542901) {};
				\node[vertex] (5) at (0.56816, 0.586542) {};
				\node[vertex] (6) at (1.14092, 1.17299) {};
				\draw[edge] (1)edge(4) (1)edge(5) (1)edge(6) (2)edge(3) (2)edge(5) (2)edge(6) (3)edge(4) (3)edge(6) (4)edge(5);
			\end{tikzpicture}
      & 32 & 24\\[6ex]
      \multirow{2}{*}{7} & 
      \begin{tikzpicture}[scale=0.8]
				\node[vertex] (1) at (0., 0.902612) {};
				\node[vertex] (2) at (2.0773, 1.40564) {};
				\node[vertex] (3) at (2.07508, 0.394714) {};
				\node[vertex] (4) at (1.33654, 0.) {};
				\node[vertex] (5) at (1.33847, 1.80086) {};
				\node[vertex] (6) at (0.998751, 0.62641) {};
				\node[vertex] (7) at (1.00049, 1.17512) {};
				\draw[edge] (1)edge(6) (1)edge(7) (2)edge(3) (2)edge(5) (2)edge(7) (3)edge(4) (3)edge(6) (4)edge(6) (4)edge(7) (5)edge(6) (5)edge(7);
			\end{tikzpicture}
			\quad
			\begin{tikzpicture}[scale=0.8]
				\node[vertex] (1) at (0., 0.993319) {};
				\node[vertex] (2) at (1.65317, 0.651298) {};
				\node[vertex] (3) at (2.21524, 1.06624) {};
				\node[vertex] (4) at (1.88256, 0.) {};
				\node[vertex] (5) at (1.85979, 1.72831) {};
				\node[vertex] (6) at (1.02017, 1.34532) {};
				\node[vertex] (7) at (0.827394, 0.399951) {};
				\draw[edge] (1)edge(6) (1)edge(7) (2)edge(4) (2)edge(5) (2)edge(7) (3)edge(4) (3)edge(5) (3)edge(6) (4)edge(7) (5)edge(6) (6)edge(7);
			\end{tikzpicture}
			\quad
			\begin{tikzpicture}[scale=0.8]
				\node[vertex] (1) at (0., 0.840839) {};
				\node[vertex] (2) at (2.67089, 0.465001) {};
				\node[vertex] (3) at (2.19637, 1.38739) {};
				\node[vertex] (4) at (1.96024, 0.696549) {};
				\node[vertex] (5) at (1.64251, 0.) {};
				\node[vertex] (6) at (0.841646, 0.324386) {};
				\node[vertex] (7) at (1.05564, 1.00324) {};
				\draw[edge] (1)edge(6) (1)edge(7) (2)edge(3) (2)edge(4) (2)edge(5) (3)edge(4) (3)edge(7) (4)edge(6) (5)edge(6) (5)edge(7) (6)edge(7);
			\end{tikzpicture}
			& \multirow{2}{*}{64} & 48|48|48|\\[-1ex]
			&
			\begin{tikzpicture}[scale=0.8,baseline=1cm]
				\node[vertex] (1) at (0.91642, 0.) {};
				\node[vertex] (2) at (0.914886, 1.29661) {};
				\node[vertex] (3) at (0., 1.10126) {};
				\node[vertex] (4) at (1.83015, 1.10184) {};
				\node[vertex] (5) at (1.83103, 0.195186) {};
				\node[vertex] (6) at (0.000920175, 0.193779) {};
				\node[vertex] (7) at (0.914872, 0.647782) {};
				\draw[edge] (1)edge(2) (1)edge(5) (1)edge(6) (2)edge(3) (2)edge(4) (3)edge(6) (3)edge(7) (4)edge(5) (4)edge(7) (5)edge(7) (6)edge(7);
			\end{tikzpicture}
			\quad
			\begin{tikzpicture}[scale=0.8,baseline=1cm]
				\node[vertex] (1) at (1.72686, 0.227826) {};
				\node[vertex] (2) at (1.72504, 1.43682) {};
				\node[vertex] (3) at (0., 0.830988) {};
				\node[vertex] (4) at (0.624184, 1.66309) {};
				\node[vertex] (5) at (0.62616, 0.) {};
				\node[vertex] (6) at (2.11526, 0.832802) {};
				\node[vertex] (7) at (0.880502, 0.832071) {};
				\draw[edge] (1)edge(2) (1)edge(5) (1)edge(6) (2)edge(4) (2)edge(6) (3)edge(4) (3)edge(5) (3)edge(7) (4)edge(7) (5)edge(7) (6)edge(7);
			\end{tikzpicture}
      &&56|48\\[6ex]
      8 & 
      \begin{tikzpicture}[scale=0.8]
				\node[vertex] (1) at (1.64893, 1.43348) {};
				\node[vertex] (2) at (0.766508, 1.43208) {};
				\node[vertex] (3) at (0., 0.796979) {};
				\node[vertex] (4) at (2.41762, 0.798665) {};
				\node[vertex] (5) at (2.02749, 0.000384359) {};
				\node[vertex] (6) at (0.391446, 0.) {};
				\node[vertex] (7) at (1.47745, 0.497343) {};
				\node[vertex] (8) at (0.94273, 0.497205) {};
				\draw[edge] (1)edge(2) (1)edge(4) (1)edge(8) (2)edge(3) (2)edge(7) (3)edge(6) (3)edge(8) (4)edge(5) (4)edge(7) (5)edge(7) (5)edge(8) (6)edge(7) (6)edge(8);
			\end{tikzpicture}
			\quad
			\begin{tikzpicture}[scale=0.8]
				\node[vertex] (1) at (0.506062, 1.63004) {};
				\node[vertex] (2) at (1.51265, 1.63163) {};
				\node[vertex] (3) at (2.02315, 0.705683) {};
				\node[vertex] (4) at (0., 0.70201) {};
				\node[vertex] (5) at (0.208568, 0.) {};
				\node[vertex] (6) at (1.81708, 0.00285166) {};
				\node[vertex] (7) at (1.01292, 0.0412725) {};
				\node[vertex] (8) at (1.01131, 0.838488) {};
				\draw[edge] (1)edge(2) (1)edge(4) (1)edge(8) (2)edge(3) (2)edge(8) (3)edge(6) (3)edge(7) (4)edge(5) (4)edge(7) (5)edge(7) (5)edge(8) (6)edge(7) (6)edge(8);
			\end{tikzpicture}
      & 192 & 136|128\\[6ex]
      9 & 
      \begin{tikzpicture}[scale=0.8]
				\node[vertex] (1) at (0.458732, 1.92024) {};
				\node[vertex] (2) at (1.7521, 1.92592) {};
				\node[vertex] (3) at (0.65336, 0.) {};
				\node[vertex] (4) at (1.57651, 0.00626765) {};
				\node[vertex] (5) at (2.21727, 1.12881) {};
				\node[vertex] (6) at (0., 1.11783) {};
				\node[vertex] (7) at (1.11128, 0.518085) {};
				\node[vertex] (8) at (1.54049, 1.26802) {};
				\node[vertex] (9) at (0.675873, 1.26316) {};
				\draw[edge] (1)edge(6) (1)edge(8) (1)edge(9) (2)edge(5) (2)edge(8) (2)edge(9) (3)edge(4) (3)edge(7) (3)edge(9) (4)edge(7) (4)edge(8) (5)edge(7) (5)edge(8) (6)edge(7) (6)edge(9);
			\end{tikzpicture}
      & 576 & 320\\
      \bottomrule
    \end{tabular}
    \label{tab:max}
  \end{center}
\end{table}

\begin{remark}
 The paper \cite{Bartzos2018} shows that the number of real spherical 
realizations matches the number of complex ones for some graphs in 
Table~\ref{tab:max} (all graphs with $6$ and $7$ vertices, and one of the graphs 
with 8 vertices).
\end{remark}

Note that up till 8 vertices the graph with maximal Laman number 
(i.e., number of realizations in the plane) is also in the list of graphs with 
maximal number of realizations on the sphere. However, the graph with maximal 
Laman number with 9 vertices is different from the one with maximal number 
of realizations on the sphere. The latter has a very particular structure (see last row of
Table~\ref{tab:max}).

Our recursive algorithm gives a significant improvement over the naive
approach, which is to determine the number of solutions via a Gr\"obner basis computation.
Furthermore, the Gr\"obner basis approach needs randomly fixed edge lengths where \hyperref[alg:CountRealizations]{\texttt{CountRealizations}}
computes the numbers symbolically.
For a graph with 9 vertices and maximal number of realizations (see Table~\ref{tab:max}) our algorithm needs 5.66s in Mathematica and 3.57s in Python.
The Gr\"obner basis computation needed 5850s in Mathematica and 27s in Maple.

\newcommand{\etalchar}[1]{$^{#1}$}
\providecommand{\bysame}{\leavevmode\hbox to3em{\hrulefill}\thinspace}
\providecommand{\MR}{\relax\ifhmode\unskip\space\fi MR }
\providecommand{\MRhref}[2]{%
  \href{http://www.ams.org/mathscinet-getitem?mr=#1}{#2}
}
\providecommand{\href}[2]{#2}

\vspace{-0.2cm}

\begin{thebibliography}{CGG{\etalchar{+}}18}

\bibitem[BELT18]{Bartzos2018}
Evangelos Bartzos, Ioannis~Z. Emiris, Jan Legersk{\'y}, and Elias Tsigaridas,
  \emph{On the maximal number of real embeddings of minimally rigid graphs in
  $\mathbb{R}^2$, $\mathbb{R}^3$ and {$S^2$}}, Available at
  \href{https://arxiv.org/abs/1811.12800}{https://arxiv.org/abs/1811.12800}.

\bibitem[BK18]{Bernstein2018}
Daniel~I. Bernstein and Robert Krone, \emph{The tropical {Cayley-Menger}
  variety}, Available at
  \href{https://arxiv.org/abs/1812.09370}{https://arxiv.org/abs/1812.09370}.

\bibitem[BS04]{Borcea2004}
Ciprian Borcea and Ileana Streinu, \emph{The number of embeddings of minimally
  rigid graphs}, Discrete \& Computational Geometry \textbf{31} (2004),
  287--303.

\bibitem[CGG{\etalchar{+}}18]{Capco2018}
Jose Capco, Matteo Gallet, Georg Grasegger, Christoph Koutschan, Niels Lubbes,
  and Josef Schicho, \emph{The number of realizations of a {L}aman graph}, SIAM
  Journal on Applied Algebra and Geometry \textbf{2} (2018), no.~1, 94--125.

\bibitem[EJN{\etalchar{+}}19]{Eftekhari2018}
Yaser Eftekhari, Bill Jackson, Anthony Nixon, Bernd Schulze, Shin-ichi
  Tanigawa, and Walter Whiteley, \emph{Point-hyperplane frameworks, slider
  joints, and rigidity preserving transformations}, Journal of Combinatorial
  Theory, Series B \textbf{135} (2019), 48--74.

\bibitem[ETV13]{Emiris2013}
Ioannis~Z. Emiris, Elias~P. Tsigaridas, and Antonios Varvitsiotis, \emph{{Mixed
  Volume and Distance Geometry Techniques for Counting Euclidean Embeddings of
  Rigid Graphs}}, Distance Geometry: Theory, Methods, and Applications,
  pp.~23--45, Springer New York, New York, NY, 2013.

\bibitem[Ful84]{Fulton1984}
William Fulton, \emph{Introduction to intersection theory in algebraic
  geometry}, CBMS Regional Conference Series in Mathematics, vol.~54, Published
  for the Conference Board of the Mathematical Sciences, Washington, DC; by the
  American Mathematical Society, Providence, RI, 1984. \MR{735435}

\bibitem[Ful98]{Fulton1998}
\bysame, \emph{Intersection theory}, second ed., Ergebnisse der Mathematik und
  ihrer Grenzgebiete. 3. Folge. A Series of Modern Surveys in Mathematics
  [Results in Mathematics and Related Areas. 3rd Series. A Series of Modern
  Surveys in Mathematics], vol.~2, Springer-Verlag, Berlin, 1998. \MR{1644323}

\bibitem[Gie82]{Gieseker1982}
David Gieseker, \emph{Lectures on moduli of curves}, Tata Institute of
  Fundamental Research Lectures on Mathematics and Physics, vol.~69, Published
  for the Tata Institute of Fundamental Research, Bombay; Springer-Verlag,
  Berlin-New York, 1982.

\bibitem[GKT18]{Grasegger2018}
Georg Grasegger, Christoph Koutschan, and Elias Tsigaridas, \emph{Lower bounds
  on the number of realizations of rigid graphs}, Experimental Mathematics
  \textbf{0} (2018), no.~0, 1--12, online first.

\bibitem[JO19]{Jackson2018}
Bill Jackson and John~C. Owen, \emph{Equivalent realisations of a rigid graph},
  Discrete Applied Mathematics \textbf{256} (2019), 42--58.

\bibitem[Kap93]{Kapranov1993}
Mikhail~M. Kapranov, \emph{Veronese curves and {G}rothendieck-{K}nudsen moduli
  space {$\overline M_{0,n}$}}, Journal of Algebraic Geometry \textbf{2}
  (1993), no.~2, 239--262. \MR{1203685}

\bibitem[Kee92]{Keel1992}
Sean Keel, \emph{Intersection theory of moduli space of stable {$n$}-pointed
  curves of genus zero}, Transactions of the American Mathematical Society
  \textbf{330} (1992), no.~2, 545--574.

\bibitem[Knu83]{Knudsen1983}
Finn~F. Knudsen, \emph{The projectivity of the moduli space of stable curves.
  {II}. {T}he stacks {$M_{g,n}$}}, Mathematica Scandinavica \textbf{52} (1983),
  no.~2, 161--199.

\bibitem[KV07]{Kock2007}
Joachim Kock and Israel Vainsencher, \emph{An invitation to quantum
  cohomology}, Progress in Mathematics, vol. 249, Birkh\"{a}user Boston, Inc.,
  Boston, MA, 2007, Kontsevich's formula for rational plane curves.
  \MR{2262630}

\bibitem[Lam70]{Laman1970}
Gerard Laman, \emph{On graphs and rigidity of plane skeletal structures},
  Journal of Engineering Mathematics \textbf{4} (1970), 331--340.

\bibitem[Mum65]{Mumford1965}
David Mumford, \emph{Geometric invariant theory}, Ergebnisse der Mathematik und
  ihrer Grenzgebiete, Neue Folge, Band 34, Springer-Verlag, Berlin-New York,
  1965.

\bibitem[Mum77]{Mumford1977}
\bysame, \emph{Stability of projective varieties}, L'Enseignement
  Math\'{e}matique. Revue Internationale. IIe S\'{e}rie \textbf{23} (1977),
  no.~1--2, 39--110.

\bibitem[NP18]{Nekrasov2018}
Ilia Nekrasov and Gaiane Panina, \emph{Compactifications of {$M_{0,n}$}
  associated with {Alexander} self-dual complexes: Chow ring, $\psi$-classes
  and intersection numbers}, Available at
  \href{https://arxiv.org/abs/1808.08600}{https://arxiv.org/abs/1808.08600}.

\bibitem[{Pol}27]{Geiringer1927}
Hilda {Pollaczek-Geiringer}, \emph{{\"Uber die Gliederung ebener Fachwerke}},
  {Zeitschrift f\"ur Angewandte Mathematik und Mechanik (ZAMM)} \textbf{7}
  (1927), 58--72.

\bibitem[ST10]{Steffens2010}
Reinhard Steffens and Thorsten Theobald, \emph{Mixed volume techniques for
  embeddings of {Laman} graphs}, Computational Geometry \textbf{43} (2010),
  no.~2, 84 -- 93, Special Issue on the 24th European Workshop on Computational
  Geometry (EuroCG'08).

\end{thebibliography}
\end{document}